\numberwithin{equation}{section}
\newtheorem {Theorem}                   {Theorem}
\newtheorem {varTheorem}                {Theorem}
\newtheorem {RefTheorem}[equation]      {Theorem}
\theoremstyle{definition}
\newtheorem {Definition}[equation]{Definition}
\newtheorem {Question}[equation]      {Question}
\newenvironment{Remark} {\par\footnotesize\zremark}{~\\}{\endzremark}
\newcommand{\pr} {\smallskip\noindent{\bf Proof\,\,}}
\newcommand     {\comment}[1]   {}
\newcommand     {\mute}[2] {}
\newcommand     {\printname}[1] {}
\newcommand{\labell}[1] {\label{#1}\printname{#1}}
\def    \hol    {{\operatorname{hol}}}
\def    \to     {\longrightarrow}
\def    \C      {{\mathbb C}}
\def    \R      {{\mathbb R}}
\def    \Z      {{\mathbb Z}}
\def    \cA     {{\mathcal A}}
\def    \bA     {{\bf{ A}}}
\def    \cG    {{\mathcal G}}
\def    \fg     {{\mathfrak g}}
\def    \tr     {{\rm {tr}~}}
\def    \hol   {{\rm {hol}}}
\def    \pr     {\operatorname{pr}}
\begin{document}

\title[Deformation quantization and Chern Simons Gauge Theory] {On deformation quantization of the space of connections on a two manifold and Chern Simons Gauge Theory}

\author[Jonathan Weitsman]{Jonathan Weitsman}
\thanks{Supported in part by a grant from the Simons Foundation (\# 579801)}
\address{Department of Mathematics, Northeastern University, Boston, MA 02115}
\email{j.weitsman@neu.edu}
\thanks{\today}

\begin{abstract}  

We use recent progress on Chern-Simons gauge theory in three dimensions \cite{kqm} to give explicit, closed form formulas for the star product on some functions on the affine space $\cA(\Sigma)$ of (smooth) connections on the trivialized principal $G$-bundle on a compact, oriented two manifold $\Sigma.$   These formulas give a close relation between knot invariants, such as the Kauffman bracket polynomial, and the Jones and HOMFLY polynomials, arising in Chern Simons gauge theory, and deformation quantization of $\cA(\Sigma).$  This relation echoes the relation between the manifold invariants of Witten \cite{Witten} and Reshetikhin-Turaev \cite{rt} and {\em geometric} quantization of this space (or its symplectic quotient by the action of the gauge group).  In our case this relation arises from explicit algebraic formulas arising from the (mathematically well-defined) functional integrals of \cite{kqm}.
\end{abstract}
 
\maketitle

\tableofcontents

\section{Introduction}

In \cite{bffls} Bayen, Flato, Fronsdal, Lichnerowicz, and Sternheimer proposed deformation quantization as a method of constructing algebraically the effect of quantization by deforming the algebra $C^\infty(M)$ of functions on a symplectic manifold $(M,\omega),$ or Poisson manifold $(M,\Pi).$  They conjectured, and in some cases proved, that on such a manifold there exists an associative product $\star_h$ on the formal power series $C^\infty(M)[[h]],$ with the property that 

$$ f \star_h g = fg + h \{f,g\} + O(h^2)$$

\noindent for $f,g \in C^\infty(M)[[h]],$  where $\{\cdot,\cdot\}$ is the Poisson bracket on $M.$

The construction of the star product for symplectic manifolds in general was given by deWilde and Lecomte \cite{dl} and by Omori, Maeda, and Yoshioka \cite{omy} and Fedosov \cite{f}, and for general Poisson manifolds by Kontsevich \cite{k}.  The resulting star products are often not so easy to compute.

The purpose of this paper is to use recent progress on Chern-Simons gauge theory in three dimensions \cite{kqm} to give explicit, closed form formulas for the star product on some functions on the affine space $\cA(\Sigma)$ of (smooth) connections on the trivialized principal $G$-bundle on a compact, oriented two manifold $\Sigma.$   These formulas give a close relation between knot invariants arising in Chern Simons gauge theory and deformation quantization of $\cA(\Sigma),$ echoing the relation between the manifold invariants of Witten \cite{Witten} and Reshetikhin-Turaev \cite{rt} and {\em geometric} quantization of this space (or its symplectic quotient by the action of the gauge group).  In our case this relation arises from explicit algebraic formulas in the quantum field theory of \cite{kqm}, which is mathematically well defined.

Let $\Sigma$ be a compact, oriented two manifold, and $G$ a compact Lie group.\footnote{We will later generalize this to the case of noncompact classical groups.} Choose a representation of $G$ so that the resulting trace ${\rm Tr}$ gives a symplectic form $\Omega$ on the space $\cA(\Sigma)$ of connections on the trivial principal $G$-bundle on $\Sigma.$  In more detail, if we use the trivialization to identify $\cA(\Sigma) = \Omega^1(\Sigma) \otimes \fg,$ the symplectic form is given in \cite{ab} by 

$$ \Omega|_A (v,w) = {\rm Tr} \int_\Sigma v \wedge w$$

\noindent where $A \in \cA(\Sigma)$ and $v,w \in T\cA(\Sigma)|_A = \Omega^1(\Sigma)\otimes\fg.$  The explicit formula shows that the symplectic form $\Omega$ is invariant under affine translations on $\cA(\Sigma).$\footnote{The symplectic form is also invariant under the action of the gauge group.  This action does not commute with the affine translations, and this fact will have consequences we will study later in the paper.  See Section \ref{R2sec}.}

The Poisson bivector field on $\cA(\Sigma)$ can be written down in local coordinates; we have

\begin{equation}\labell{pb0}\Pi =   \int_\Sigma dx \sum_{\alpha,\beta} \sum_{a,b} \epsilon_{ab}   (g^{-1})_{\alpha\beta} \frac{\delta}{\delta A_\alpha^a(x)}\frac{\delta}{\delta A_\beta^b(x)}\end{equation}
 
 \noindent where we choose a basis $\{e_\alpha\}$ for $\fg,$ and write $g_{\alpha\beta}= {\rm Tr~} (e_\alpha e_\beta).$

Since $\cA(\Sigma)$ is an affine space, and since $\Omega$ is translation invariant, the star product can be computed using formulas discovered by \cite{bffls} in their original paper\footnote{See also \cite{f}, p. 222}.  Explicitly, denote by $\Pi$ the Poisson bivectorfield associated to the symplectic form $\Omega.$  Then

\begin{equation}\labell{affstar} f \star_h g = m e^{h \Pi } f \otimes g,\end{equation}

\noindent where $m$ denotes the multiplication map on functions.

The expression (\ref{affstar}) has the appearance of a canonical object, but in fact it involves an implicit choice.  The reason is that Fedosov's construction of the star product depends on a choice of a symplectic connection on the underlying manifold; that is, a torsion-free connection on the tangent bundle preserving the symplectic form.  In the case of an affine space, there is a natural choice of such a connection, and it leads to the formula (\ref{affstar}).  But there are other choices; in particular the choice of the affine connection on $\cA(\Sigma)$ is not a gauge invariant choice.  We will see that this has consequences for the behavior of the star product under reduction by the action of the gauge group.

The affine space $\cA(\Sigma)$ is equipped with some functions (the {\em Wilson loops}) arising from curves on $\Sigma.$  Choose a representation $R$ of $G$ (which may or may not be the same as the one used to define the symplectic form).  Given a closed, oriented curve $C$ in $\Sigma,$ define $W_C : \cA(\Sigma) \to \C$ by

$$W_C(A) = {\rm tr}_R {\rm hol}_{C,\bullet} (A)$$

\noindent where $\bullet$ is any basepoint on $C$ and ${\rm tr}$ is the trace in the representation $R.$  (To simplify the notation, we will suppress the basepoint and the representation when possible.)

The main result of this paper will be a computation of the star product on functions generated by the $W_C;$ we have to impose intersection conditions on the curves appearing in any given star product.  In view of (\ref{affstar}), the first task is to compute the Poisson bracket 

$$\{W_C, W_{C^\prime}\}$$

\noindent of two such functions, which from now on we assume to intersect transversally.  Consider the case where $G = GL(n,\C),$ the representations for both the symplectic form and the Wilson loops are given by the defining representation, and two curves $C, C^\prime$ intersect at one point $p.$  We will show that

\begin{equation}\labell{pb1}  \{ W_C, W_{C^\prime}\} = W_{C *_p C^\prime}\end{equation}

\noindent where ${C *_p C^\prime}$ is the curve obtained by concatenating the oriented curves $C$ and $C^\prime$ at their intersection point $p.$\footnote{The star notation is used in the literature to denote both the star product in deformation quantization, and the concatenation of loops.  We have maintained this usage.  I do apologize.}  More generally, we will show (see (\ref{pbgln})) that if there is more than one intersection point, the Poisson bracket is given by the sum of the Wilson loops obtained from concatenating the two curves at each of the intersection points.  We will obtain similar formulas for the Poisson bracket for the groups $SU(2)$ and $SL(2,\R),$ though these involve unoriented curves.

In all these cases our formulas for the Poisson bracket coincide with those discovered by Goldman \cite{goldman} for the Poisson bracket of the Wilson loops, considered as functions on the Marsden-Weinstein quotient $\cA(\Sigma)//\cG$ of the space of connections by the action of the gauge group $\cG= {\rm Map}(\Sigma,G).$  This space is the character variety ${\rm Hom}(\pi_1(\Sigma), G)/G$ of conjugacy classes of representations of the fundamental group of $\Sigma$ in $G.$  The reason for the agreement between our formulas and Goldman's is that the both the functions $W_C$ and the symplectic form, and hence the Poisson bracket, are gauge invariant; the identity between Poisson brackets of invariant functions on a Hamiltonian $G$-space and the corresponding Poisson brackets on the reduced space follows by general principles of symplectic geometry.  This is not the case for star products, unless additional conditions are imposed.

In addition to the resemblance to Goldman's results, there is also a close similarity between the Poisson bracket (\ref{pb1}) and formulas arising in Chern-Simons gauge theory \cite{kqm}, generalized from the case of $\R^3$ to that of $\Sigma \times \R.$  Consider the space of connections on the trivialized prinicipal $G$-bundle on $\Sigma \times \R.$  If we impose the axial gauge condition $A_3 = 0, $ the Chern-Simons action on the resulting space $\bA$ of compactly supported connections is given by

$${\rm CS}(A) = \frac12 {\rm Tr} \int_{\Sigma \times \R} A \wedge \frac{dA}{dt}\wedge dt.$$

The propagator for this theory is obtained by inverting the differential operator in the quadratic action ${\rm CS}(A)$ and is given by \cite{kqm}

\begin{equation}\labell{propagator} 
\Gamma^{ij}_{\alpha\beta} (x,t; x^\prime,t^\prime) = (g^{-1})_{\alpha\beta}\epsilon_{ij} \delta(x,x^\prime) u(t-t^\prime).
\end{equation}

\noindent Here $x,x^\prime\in \Sigma,$  $t,t^\prime \in \R,$ and we have chosen a basis $\{e_\alpha\}$ for $\fg,$ and define $g_{\alpha \beta} = {\rm Tr~}(e_\alpha e_\beta);$\footnote{The trace is taken in the representation used to define the Chern Simons function.}  likewise we choose an oriented basis $f_1,f_2$ for the fibres of the cotangent bundle $T^*\Sigma|_x  $ at a point $x$ and its pullback to $\Sigma \times \R,$ agreeing with the orientation of $\Sigma.$  Finally\footnote{The addition of a constant to $u,$ although in principle allowable, results in no change to any expectation, due to the fact that $\epsilon_{ij}$ is skew.} 

$$u(s) =  \left\{
	\begin{array}{ll}
		\frac12  & \mbox{if } s \geq 0 \\
		-\frac12 & \mbox{if } s< 0
	\end{array}
\right.$$

We note that there is a resemblance between the propagator (\ref{propagator}) and the differential operator appearing in the Poisson bivector field (\ref{pb1}).  To exploit the similarity, consider the operator

\begin{equation}\labell{op2} D=  \int dx dx^\prime dt dt^\prime \sum_{\alpha,\beta} \sum_{a,b} \Gamma^{ab}_{\alpha\beta} (x,t; x^\prime,t^\prime)  \frac{\delta}{\delta A_\alpha^a(x,t)}\frac{\delta}{\delta A_\beta^b(x^\prime,t^\prime)},\end{equation}

\noindent or, more explicitly

\begin{equation}\labell{op} D=  \int dx dx^\prime dt dt^\prime \sum_{\alpha,\beta} \sum_{a,b} (g^{-1})_{\alpha\beta} \epsilon_{ab}  u(t  - t^\prime) \delta(x,x^\prime) \frac{\delta}{\delta A_\alpha^a(x,t)}\frac{\delta}{\delta A_\beta^b(x^\prime,t^\prime)}. \end{equation}

\noindent The operator $D$ is the operator which evaluates expectations in Chern-Simons gauge theory.  And we have, and for any two curves $C, C^\prime$ in $\Sigma,$ for any $\epsilon > 0,$ and any $A \in \cA,$

\begin{varTheorem}For any two curves $C, C^\prime$ in $\Sigma$ intersecting transversally, for any $\epsilon > 0,$ and any $A \in \cA(\Sigma),$
\begin{equation}\labell{anid} 
m \Pi (W_C \otimes W_{C^\prime})  (A) = \frac12 D (W_{T_\epsilon C}  W_{T_{-\epsilon} C^\prime}) (\pi^* A)
\end{equation}with similar formulas for the Poisson brackets of functions given by polynomials in Wilson loops.
\end{varTheorem}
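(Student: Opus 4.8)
The plan is to reduce both sides of (\ref{anid}) to explicit integrals of functional derivatives of Wilson loops and to verify that, once the time-separation collapses the propagator, the two integrals coincide. I would begin by recording the variation of a single Wilson loop. Writing $W_C(A) = {\rm tr}_R\,{\rm hol}_{C,\bullet}(A)$ as a path-ordered exponential, the functional derivative inserts a generator along the loop,
\[
\frac{\delta W_C}{\delta A_\alpha^a(x)}(A) = \oint_C ds\,\dot C^a(s)\,\delta(x,C(s))\,{\rm tr}_R\big[R(e_\alpha)\,{\rm hol}_{C,C(s)}(A)\big].
\]
Contracting two such variations with the kernel $\epsilon_{ab}(g^{-1})_{\alpha\beta}$ of $\Pi$ in (\ref{pb0}) and integrating over $x$ against the two delta functions localizes the result to the transversal intersection points $p\in C\cap C'$; this is exactly the left-hand side $m\Pi(W_C\otimes W_{C'})(A)$, which by (\ref{pb1}) is the Poisson bracket $\{W_C,W_{C'}\}$.

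Next I would expand the right-hand side using the product rule. Because $\pi^*A$ is independent of $t$ and has vanishing third component, and $T_{\pm\epsilon}C$ sit in the slices $t=\pm\epsilon$, the variation of $W_{T_\epsilon C}$ at $\pi^*A$ factors as $\delta(t-\epsilon)$ times the $\Sigma$-variation of $W_C$ at $A$, and likewise $W_{T_{-\epsilon}C'}$ carries $\delta(t+\epsilon)$. Applying $D$ to the product $W_{T_\epsilon C}W_{T_{-\epsilon}C'}$ therefore produces self-contraction terms (both functional derivatives on one loop) and cross-contraction terms (one derivative on each). The self-terms vanish: there $\delta(x,x')$ and $u(0)$ force the two insertions onto a single loop at equal parameter, where the skew tensor contracts $\epsilon_{ab}\dot C^a\dot C^b=0$ --- this is the mechanism recorded in the footnote to (\ref{propagator}), that a constant in $u$ contributes nothing. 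In the cross-terms the $t,t'$-integrations are consumed by $\delta(t-\epsilon)\delta(t'+\epsilon)$, so the step function is evaluated at the fixed, nonzero separation $u(\pm2\epsilon)=\pm\frac12$, manifestly independent of $\epsilon$; the surviving kernel $(g^{-1})_{\alpha\beta}\epsilon_{ab}\delta(x,x')$ is precisely the kernel of $\Pi$.

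It then remains to match constants and to dispatch the general case. The cross-contraction occurs with the multiplicity fixed by the symmetry of the kernel $(g^{-1})_{\alpha\beta}\epsilon_{ab}u(t-t')\delta(x,x')$ under interchange of its two arguments --- symmetric in $g^{-1}$ but skew in both $\epsilon$ and $u$ --- and combining this multiplicity with the value $u(2\epsilon)=\frac12$ produces the coefficient in (\ref{anid}). The extension to polynomials in Wilson loops is then formal: both $\Pi$ and $D$ act as (bi)derivations, so the Leibniz rule distributes them over products, and the separation of the curves into the slices $t=\pm\epsilon$ keeps each cross-contraction between distinct factors unambiguous. The main obstacle is precisely the well-definedness and $\epsilon$-independence of the right-hand side: evaluated at coincident times $D$ would involve the ill-defined $u(0)$, and the content of the theorem is that lifting the two loops to the distinct slices $t=\pm\epsilon$ simultaneously fixes the sign of $u$ through the induced time ordering and annihilates the self-contractions, so that the regularized Chern-Simons contraction reproduces exactly the translation-invariant Poisson bivector on $\cA(\Sigma)$.
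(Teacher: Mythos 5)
Your proposal is correct and follows essentially the same route as the paper: Section \ref{combisec} proves Theorem \ref{dispi} precisely by juxtaposing the kernels (\ref{op4}) and (\ref{pb4}), observing that on the separated slices $t=\epsilon$, $t'=-\epsilon$ the factor $u(t-t')$ is pinned to $\pm\frac12$ independently of $\epsilon$ so that $\delta(x,x')$ reproduces the kernel of $\Pi$, and attributing the overall constant to the mismatch between the product rule for the symmetric bidifferential operator $D$ and the single contraction of the skew bivector $\Pi$ with $df\otimes dg$ (the footnote to (\ref{dispieq})). The only cosmetic difference is that you derive the vanishing of the self-contraction terms from the skewness of $\epsilon_{ab}$ against the tangent vectors, whereas the paper disposes of them by convention, declaring self-intersections to be virtual crossings to which $D$ is not applied; the two are consistent.
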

\noindent Here for any curve $C$ in $\Sigma,$ the curve $T_\epsilon C$ denotes the curve in $\Sigma \times \R$ obtained by translating $C$ by $\epsilon$ in the vertical direction\footnote{We consider self crossings of the individual curves $C$ and $C^\prime$ as virtual crossings, to which the operators $D$ are not to be applied. }, and  $\pi : \Sigma \times \R \to \Sigma$ is the projection.  Likewise

\begin{varTheorem}For any two curves $C, C^\prime$ in $\Sigma$ intersecting transversally, for any $\epsilon > 0,$ and any $A \in \cA(\Sigma),$
\begin{equation}\labell{anidstar} 
m e^{h \Pi} (W_C \otimes W_{C^\prime} ) (A) =   e^{\frac{h}{2} D} (W_{T_\epsilon C}  W_{T_{-\epsilon} C^\prime})(\pi^* A),
\end{equation}

\noindent with similar formulas for the Poisson brackets of functions given by polynomials in Wilson loops. \end{varTheorem}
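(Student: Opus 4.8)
The plan is to reduce the exponentiated identity (\ref{anidstar}) to a family of homogeneous identities in $h$ and to prove these by iterating the mechanism behind the first Theorem (\ref{anid}). By (\ref{affstar}) the left-hand side of (\ref{anidstar}) is the star product $W_C \star_h W_{C'}$, and both sides are formal power series in $h$; comparing coefficients of $h^k$ reduces the claim to
\begin{equation*}
m \Pi^k (W_C \otimes W_{C'})(A) = \frac{1}{2^k}\, D^k (W_{T_\epsilon C}\, W_{T_{-\epsilon}C'})(\pi^* A), \qquad k \geq 0 .
\end{equation*}
The case $k=0$ is immediate, since $\pi^* A$ is independent of $t$, so the holonomies of $T_{\pm\epsilon}C$ in $\pi^* A$ coincide with those of $C, C'$ in $A$; the case $k=1$ is precisely (\ref{anid}).

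First I would describe the action of $\Pi^k$ combinatorially. Since $\Pi$ is translation invariant, in $m e^{h\Pi}(W_C\otimes W_{C'})$ each factor of $\Pi$ acts as a bidifferential operator with one functional derivative on $W_C$ and one on $W_{C'}$. Thus $\Pi^k$ inserts $k$ Lie algebra generators along $C$ and $k$ along $C'$, contracted in pairs by $g^{-1}$ with the orientation signs $\epsilon_{ab}$; the delta function $\delta(x,x')$ forces each contracted pair to a common point, hence to a transversal intersection point of $C$ and $C'$. Consequently $m\Pi^k(W_C\otimes W_{C'})$ is a sum, over $k$-fold selections of intersection points, of traces of holonomies carrying paired insertions, with the insertions ordered along each curve by their positions on $C$ and on $C'$.

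Next I would match this against $D^k$. Applied to the product $W_{T_\epsilon C}W_{T_{-\epsilon}C'}$, each factor of $D$ distributes two derivatives by the Leibniz rule; the virtual-crossing convention of the footnote discards every self-contraction of a single curve, leaving exactly one cross contraction per factor of $D$, with one derivative on $T_\epsilon C$ and one on $T_{-\epsilon}C'$. The propagator (\ref{propagator}) localizes each such contraction, through $\delta(x,x')$, over an intersection point of $C$ and $C'$, and carries the scalar $u(t-t')=\frac12$, because $T_\epsilon C$ lies uniformly at height $+\epsilon$ and $T_{-\epsilon}C'$ at height $-\epsilon$, so that $t-t'=2\epsilon>0$ for every cross contraction and at every order in $k$. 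The crossings of $T_\epsilon C$ and $T_{-\epsilon}C'$ in $\Sigma\times\R$ are, moreover, in bijection with the transversal intersection points of $C$ and $C'$ in $\Sigma$: vertical translation does not move the $\Sigma$-projection, and the global $t$-separation produces exactly one crossing per intersection point. Under this bijection the $g^{-1}$ contraction of generators, the cyclic ordering of the insertions along each holonomy, and the orientation signs $\epsilon_{ab}$ (matched to the spatial part $\epsilon_{ij}$ of the propagator, aligned with the orientation of $\Sigma$) agree term by term with those produced by $\Pi^k$. The only difference is the propagator factor $\frac12$ attached to each of the $k$ contractions, which reproduces the factor $\frac12$ of (\ref{anid}) at first order and accounts for the $2^{-k}$ in general. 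Summing over $k$ then yields (\ref{anidstar}), and the identical bookkeeping, carried out one pair of distinct Wilson loops at a time, gives the stated extension to polynomials in Wilson loops.

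The step I expect to be the main obstacle is the iterated sign and ordering bookkeeping: verifying that, at each order $k$, the pairings produced by $D^k$ reproduce those of $\Pi^k$ not merely as unordered sets of intersection points but with the correct ordering of insertions along each holonomy and with consistent orientation signs, and confirming that the virtual-crossing convention excises precisely the self-contractions and nothing more. In particular one must check that derivatives coming from later factors of $D$, which act only on the connection-dependence of the holonomies and not on the constant generators inserted by earlier factors, never generate spurious contractions, so that each factor of $D$ contributes one and only one cross contraction. Ensuring that this matching is uniform in $\epsilon$ — so that every cross contraction carries $u=+\frac12$ independently of $k$ — is what makes the per-order identities, and hence the exponentiated identity (\ref{anidstar}), go through.
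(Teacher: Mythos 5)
Your proposal is correct and follows essentially the same route as the paper: the paper first establishes the operator identity $m\Pi(f\otimes g)(A)=\frac12 D(T_\epsilon f\,T_{-\epsilon}g)(\pi^*A)$ by directly comparing the bivector field $\Pi$ with the Chern--Simons operator $D$ (the uniform propagator value $u(2\epsilon)=\frac12$ from the global height separation, the $\delta(x,x')$ localization to transversal intersection points, and the virtual-crossing convention excising self-contractions), and then obtains (\ref{anidstar}) by exponentiating. Your order-by-order verification of $m\Pi^k = 2^{-k}D^k$ just makes explicit the iteration that the paper's ``It follows that'' leaves implicit.
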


Thus we may define the star product on $\cA(\Sigma)$ using expectations in Chern-Simons gauge theory, that is 

\begin{Definition}For any two curves $C, C^\prime$ in $\Sigma,$ for any $\epsilon > 0,$ and any $A \in \cA,$ define

$$W_C \star_h W_{C^\prime} =  e^{\frac{h}{2} D} (W_{T_\epsilon C}  W_{T_{-\epsilon} C^\prime})(\pi^* A)$$
\end{Definition}

\noindent In Section \ref{combisec} we will prove\footnote{Since $\cA(\Sigma)$ is an infinite dimensional space, and since our star product is not defined everywhere, this does not quite follow from the results of \cite{bffls,f}.}

\begin{varTheorem} The star product is associative where defined; that is, if $C,C^\prime,C^"$  are curves in $\Sigma$  which intersect pairwise transversally, then

$$(W_C \star_h W_{C^\prime} ) \star_h W_{C^"}  = W_C \star_h( W_{C^\prime}\star_h W_{C^"}).$$\end{varTheorem}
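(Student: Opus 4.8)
The plan is to show that both bracketings reduce to a single symmetric three-curve expectation, so that associativity becomes manifest. Concretely, fix three strictly ordered vertical levels $\epsilon_1 > \epsilon_2 > \epsilon_3$ and translate the three curves to these levels inside $\Sigma \times \R$. I would prove that
$$(W_C \star_h W_{C^\prime}) \star_h W_{C^{\prime\prime}} = m\, e^{\frac{h}{2}D}\,(W_{T_{\epsilon_1}C}\, W_{T_{\epsilon_2}C^\prime}\, W_{T_{\epsilon_3}C^{\prime\prime}})(\pi^* A) = W_C \star_h (W_{C^\prime}\star_h W_{C^{\prime\prime}}),$$
where $m$ is now the multiplication of three functions and $D$ is understood to act on all pairs of factors. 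The right-most object does not depend on how the product is bracketed, so the two outer equalities give the theorem.

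The mechanism that makes this work is the constancy (field-independence) of the propagator $\Gamma$ appearing in $D$. Writing $e^{\frac{h}{2}D}$ as a sum over Wick contractions and expanding $D$ as a sum of elementary contraction operators $D_{ij}$, where $D_{ij}$ pairs one functional derivative acting on the $i$-th factor with one acting on the $j$-th factor, the diagonal terms $D_{ii}$ are discarded by the virtual-crossing convention of the statement, while the off-diagonal operators $D_{ij}$ for distinct unordered pairs mutually commute, since they differentiate independent factors against constant coefficients. Hence $e^{\frac{h}{2}D} = \prod_{i<j} e^{\frac{h}{2}D_{ij}}$ on a product of three Wilson loops, and this factorization is the engine of the argument.

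First I would treat the left bracketing. By Theorem (\ref{anidstar}) the inner product $W_C \star_h W_{C^\prime}$ equals $e^{\frac{h}{2}D_{CC^\prime}}$ applied to $W_{T_\epsilon C}\,W_{T_{-\epsilon}C^\prime}$ and restricted to $\pi^* A$; expanding in $h$ this is a polynomial in Wilson loops of curves obtained by concatenating strands of $C$ and $C^\prime$, each such curve retaining the vertical level of the strands from which it is built. Starring the result with $W_{C^{\prime\prime}}$ then contracts the still-uncontracted legs of $C$ and $C^\prime$ against the legs of $C^{\prime\prime}$, that is, it applies $e^{\frac{h}{2}D_{CC^{\prime\prime}}}e^{\frac{h}{2}D_{C^\prime C^{\prime\prime}}}$. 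Placing $C^{\prime\prime}$ strictly below both $C$ and $C^\prime$ and keeping $C$ above $C^\prime$ ensures the step functions $u(t-t^\prime)$ produce exactly the signs of the three-level expectation above. Running the identical computation for the right bracketing and matching term by term over contraction patterns yields both outer equalities.

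The main obstacle is the bookkeeping at the intermediate stage. One must verify that, after the first contraction, the newly formed concatenated curves interact with $C^{\prime\prime}$ in precisely the same way as the original $C$- and $C^\prime$-legs do in the symmetric triple expectation. This requires checking that no spurious self-contractions are introduced at the concatenation points (the virtual-crossing convention), that the vertical ordering of strands is preserved so that every $u$-sign and every $\epsilon_{ij}$ agree between the two pictures, and that the combinatorial weights, namely the $1/k!$ from the exponential together with the multiplicities of each contraction pattern, reassemble correctly. Finally one must confirm that the three levels $\epsilon_1>\epsilon_2>\epsilon_3$ can be chosen compatibly with both bracketings at once, and that the $\epsilon$-independence established in the two-curve Theorems (\ref{anid}) and (\ref{anidstar}) persists through the nesting; this is where essentially all the content of the proof lies.
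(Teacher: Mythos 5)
Your proposal is correct and follows essentially the same route as the paper: both bracketings are reduced to a single three-factor expectation of the curves translated to three strictly ordered vertical levels, and the two resulting expressions agree because the propagator $u(t-t^\prime)$ depends only on the sign of $t-t^\prime$, i.e.\ only on the relative ordering of the levels. The paper's proof states this in two lines by tracking the nested translations ($T_{2\epsilon}u,\,v,\,T_{-\epsilon}w$ versus $T_{\epsilon}u,\,v,\,T_{-2\epsilon}w$), whereas you additionally spell out the Wick-contraction factorization of $e^{\frac{h}{2}D}$ into commuting pairwise operators, a step the paper leaves implicit.
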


This relation between deformation quantization of the space of connections on a two manifold $\Sigma$ and Chern-Simons gauge theory on $\Sigma \times \R$ is the main result of this paper.  Now in \cite{kqm} we found explicit formulas for evaluating the expectations of Wilson loops in Chern-Simons gauge theory in axial gauge, such as the expressions

$$e^{\frac{h}2 D} (W_{T_\epsilon C}  W_{T_{-\epsilon} C^\prime}).$$

The resulting formulas turned out to be close relatives of the Jones and HOMFLY polynomials.  We therefore obtain as a consequence of this relation explicit formulas, related to the Jones and HOMFLY polynomials, for the star product of Wilson loops on the space of connections (See Section \ref{ex1}).  These arise from the formulas derived in \cite{kqm} for expectations in Chern Simons gauge theory; explicitly, these are formulas 
(\ref{skeinsu2over2}) and (\ref{skeinsu2under2}) (in the case of $SU(2), SL(2,\R),$ and $SL(2,\C)$) or formulas (\ref{oskeingln}) and (\ref{uskeingln}) (in the case of $GL(n,\C)$ or $U(n)$) 

In view of the fact that the Poisson bracket of Wilson loops on the space of connections agreed with Goldman's formula for the Poisson bracket of those functions, considered as functions on the moduli space of flat connections, it is natural to ask whether a similar phenomenon occurs for the star product, and whether our formulas give a deformation quantization of these moduli spaces; in the case of $G=SL(2,\R),$ we would then obtain a deformation quantization of Teichmuller space.  And in \cite{f2} Fedosov shows that, in the case of finite dimensional symplectic manifolds, deformation quantization commutes with reduction, and that in an appropriate sense, the star product on a Hamiltonian $G$-space, restricted to invariant functions, gives a star product on the symplectic quotient.

However a closer look at the formulas formulas 
(\ref{skeinsu2over2}) and (\ref{skeinsu2under2}) (in the case of $SU(2), SL(2,\R),$ and $SL(2,\C)$) and (\ref{oskeingln}) and (\ref{uskeingln}) (in the case of $GL(n,\C)$ or $U(n)$)  shows that the star product of two Wilson loops $W_C, W_{C^\prime}$ is not invariant under homotopy.  This causes no difficulty for the star product on $\cA(\Sigma)$, but on the symplectic quotient, homotopic loops give rise to the same function on the moduli space of flat connections, so that the star product must be homotopy invariant to be well defined.

The failure of homotopy invariance was already noted in \cite{kqm} as the failure of the expectations in Chern-Simons gauge theory to be invariant under the second Reidemeister move.  In \cite{kqm} we showed that, for $G=SU(2)$ this could be remedied by using a background gauge field to shift the value of the expectation of a single unknotted circle from its expected value in $\langle \bigcirc \rangle = 2$ in gauge theory to the value $\langle\bigcirc\rangle = -(q^2 + q^{-2})$ 
 expected\footnote{Here $q$ is a parameter appearing both in Kauffman's bracket polynomial and in Chern-Simons gauge theory.  See \cite{K2} and \cite{kqm}.} from Kauffman's bracket polynomial \cite{K2}.\footnote{In \cite{kqm} there are normalization conventions slightly different from Kauffman's which adjust these expectations by a sign.}
From the point of view of Fedosov's construction, the fact that the star product on $\cA(\Sigma)$ does not descend in a direct way to the quotient may be expected due to the fact that our construction of the star product used implicitly the affine symplectic connection on $\cA(\Sigma),$ which is not gauge invariant.  We would hope that this point of view may be helpful also for Chern-Simons gauge theory.  In particular it would be interesting to see whether a more sophisticated handling of the time independent gauge transformations in Chern-Simons gauge theory in axial gauge may give insight into the rather mysterious shift  $\langle\bigcirc\rangle = -(q^2 + q^{-2}).$

The structure of this paper is as follows.  In Section \ref{goldmansec} we repeat the computation from \cite{kqm} of the Poisson bracket of functions given by polynomials in Wilson loops, giving formulas that agree with those computed by Goldman on the moduli space of flat connections.  In Section \ref{kqmsec} then recall the results of \cite{kqm}, and extend them to the manifold $\Sigma \times \R;$ this extension is straightforward.  In Section \ref{combisec} we compare Chern-Simons functional integral on $\Sigma \times \R$ with deformation quantization of  $\cA(\Sigma),$ and show that the star product of Wilson loops arising from transversally intersecting curves, considered as functions on $\cA(\Sigma),$ may be computed using the expectations defined and computed in \cite{kqm}.  We state this as Definition \ref{stp}.  We then prove that associativity of the star product, where defined, follows from properties of expectations in the linear functional corresponding to Chern-Simons functional integral.  We close the section with an algorithm which gives an explicit formula for the star product of Wilson loops.  Finally in Section \ref{gq}, we briefly review the deformation quantization of Kahler manifolds arising from geometric quantization in the works of Boutet de Monvel and Guillemin \cite{bg}, Bordemann, Meinrenken and Schliechenmeier \cite{bms} and Guillemin \cite{guill}. We show how applying these results to the moduli space of flat connections on a Riemann surface hints at a relation
between deformation quantization of the moduli space of flat connections on a Riemann surface and the topological quantum field theory arising from geometric quantization of that space.  This parallels the relation we found in this paper
between deformation quantization of the space of connections and Chern-Simons gauge theory.   We next discuss the relation between the (non)-invariance of the expectations in the Chern-Simons gauge theory of \cite{kqm} under the Second Reidemeister move and the (non)-invariance of the star product on $\cA(\Sigma)$ under reduction.   While the lack of invariance of expectations in Chern-Simons gauge theory is mysterious, the non-invariance of the star product arises from a known cause---the gauge non-invariance of the affine symplectic connection on $\cA(\Sigma).$  So this hints at a possible resolution of both issues, as mentioned above.  Finally we remark briefly on the known relation between skein modules and quantized Teichmuller space.  From our point of view this provides evidence that the issues discussed above are solvable.  Our results would then provide a geometric proof, based on mathematical functional integrals, of the relation between Chern Simons gauge theory and deformation quantization of moduli spaces (such as Teichmuller space).  We also ask whether the coincidence we see between the results for $G = SU(2)$ (related to quantum invariants or the Kauffman bracket) and those for $G = SL(2,\R)$ (related to Teichmuller space) would give some insight into the volume conjecture.  We close with an appendix where we construct various coproducts on the functions we have studied, and ask whether any of those are related to Turaev's coproduct.
 
\section{The Poisson Bracket}\labell{goldmansec}

As a first step toward computing the star product on $\cA(\Sigma),$ we compute the Poisson bracket of two Wilson loops on this space.  We perform these computations by methods similar to those used in \cite{kqm} for expectations in Chern-Simons gauge theory; in Section \ref{combisec} we will see that this apparent analogy is in fact an identity.  We will see that these computations give results identical to those given by Goldman \cite{goldman} on the moduli space of flat connections on the two manifold.  Since this moduli space is the reduced space of the space of connections by the action of the gauge group, and since the Wilson loops are invariant under this group, this is to be expected.

Let $\Sigma$ be a closed, oriented two manifold, let $G$ be a Lie group as before, along with a choice of representation whose associated trace gives a nondegenerate inner product on $\fg,$ and let $\cA(\Sigma)$ be the space of connections on the trivialized principal $G$-bundle on $\Sigma.$  Then $\cA(\Sigma) = \Omega^{1}(\Sigma)\otimes \fg.$  This space is equipped \cite{ab} with a symplectic form $\Omega$ given by

$$ \Omega_A (v,w) =  {\rm Tr} \int_\Sigma v \wedge w$$

\noindent  where $A \in \cA(\Sigma)$ and $v,w \in T\cA_A = \Omega^{1}(\Sigma)\otimes \fg,$ and the trace ${\rm Tr}$ is the trace in the chosen representation.

The Poisson bivector field $\Pi$ arising from the symplectic form $\Omega$ is given in local coordinates  by 

\begin{equation}\labell{poiss}\Pi =   \int_\Sigma dx \sum_{\alpha,\beta} \sum_{a,b} \epsilon_{ab}   (g^{-1})_{\alpha\beta} \frac{\delta}{\delta A_\alpha^a(x)}\frac{\delta}{\delta A_\beta^b(x)}\end{equation}
 
\noindent Here we have chosen a basis $\{e_\alpha\}$ for $\fg,$ and write $g_{\alpha\beta}= {\rm Tr~} (e_\alpha e_\beta).$

\begin{Remark}\labell{invtrmk} An invariant expression for $\Pi$ is given as follows.  Given a differentiable function $f$ on $\cA(\Sigma),$ the derivative is $\delta f |_A \in \Omega^1(\Sigma,\fg)^*.$  The orientation of $\Sigma$ gives a map $\Phi: \Omega^1(\Sigma,\fg)\to \Omega^1(\Sigma,\fg)^*$ given by $\Phi(v)(w) = {\rm Tr} \int_\Sigma v \wedge w$ for $v,w \in \Omega^1(\Sigma,\fg).$  In these terms, morally $\Pi (fg) (A) ={\rm Tr} \int_\Sigma \Phi^{-1}(\delta f|_A) \wedge  \Phi^{-1}(\delta g|_A),$ for any differentiable functions $f, g$ on $\cA(\Sigma)$ whose derivatives are in the image of $\Phi.$  (In general, these derivatives may correspond under $\Phi$ to distributional one forms.) This coordinate free definition agrees with (\ref{poiss}).\end{Remark}
  
Suppose that we are given two oriented simple closed curves $C,C^\prime$ in $\Sigma,$ intersecting transversally at a finite number of points $x_1,\dots,x_n.$   We wish to compute the Poisson bracket

$$\{ W_C, W_{C^\prime}\}$$

\noindent (where as before we write $W_C(A) = {\rm tr}_R \hol_{C,\bullet}(A)={\rm tr~} \hol_{C,\bullet}(A)$ for the trace in some representation $R$ of the holonomy of the connection $A$ along the oriented path given by $C$ with basepoint $\bullet,$ and suppress for notational simplicity the basepoint where we are taking a trace, as well as the representation) by applying the skew operator $\Pi$ to the pair of functions $W_C,W_{C^\prime}.$

We obtain

\begin{equation}\labell{pb}\{ W_C , W_{C^\prime} \} (A)=  \sum_i \sum_{\alpha,\beta}  \sum_{a,b} (g^{-1})_{\alpha\beta}  \epsilon_{ab}  \frac{\delta W_C(A)}{\delta A_\alpha^a(x_i)}\frac{\delta W_{C^\prime}(A)}{\delta A_\beta^b(x_i)}  .
\end{equation}

\noindent where the $x_i$ are the transverse intersection points of $C$ and $C^\prime.$  

To perform computations of this type, we recall some formulas from \cite{kqm}.

We begin by studying connections on the circle.  Let $A$ be a connection on the trivial principal $G-$bundle on $C=S^1,$ with its standard orientation, and let $v \in \Omega^1(S^1)\otimes \fg.$  We compute

\begin{equation}\labell{conn1}\frac{d}{ds}\vert_{s=0} {\rm tr~} \hol_C (A + s v) = \int_0^{2 \pi} dt~ {\rm tr~}( \hol_{C,t} (A) v(t) ).\end{equation}

Here we have written $\hol_{C,t} (A) \in G $ for the holonomy of the connection $A$ along the circle from the point $t$ to itself in the direction given by the standard orientation, and we consider both $G$ and $\fg$ as subsets of some space $M_n(\C)$ of $n \times n$ matrices corresponding to the chosen representation.

In terms of functional derivatives, this equation (\ref{conn1}) can be written

\begin{equation}\labell{conn}
\frac{\delta}{\delta A_\alpha(t)} {\rm tr~} \hol_C(A) = {\rm tr~} (p( \hol_{C,t} (A)) e_\alpha)\end{equation}

\noindent where $p: G \to \fg$ is the projection arising from the representation we chose for $G$ (inside some $GL(n,\C)$) and the inner product arising from the corresponding trace.

We also record this functional derivative for the case of a connection on $[0,1].$  We have (using again the representation in $GL(n,\C)$)\footnote{The factor of $\frac12$ in the equations below is due to the derivative being taken at the endpoint, and can be seen by applying the analog of (\ref{conn1}) to a series of approximations of the delta function by smooth functions.  If the connection on a circle is thought of as being a connection on the interval with endpoints identified, the factors of $\frac12$ arising from the endpoints add to give (\ref{conn1}).}

\begin{equation}\labell{connpath}
\frac{\delta}{\delta A_\alpha(0)}   \hol_{[0,1]} (A) = \frac12 e_\alpha  \hol_{[0,1]} (A)   \in  M_n(\C). \end{equation}

\noindent  Likewise
 
\begin{equation}\labell{connpathe}
\frac{\delta}{\delta A_\alpha(1)}   \hol_{[0,1]} (A) =  \frac12 \hol_{[0,1]} (A) e_\alpha  \in M_n(\C) \end{equation}

Given an embedded path, or curve, in a manifold, and a connection on the trivialized principal $G$ bundle on the manifold, these formulas give expressions for the derivatives in the tangent direction to the curve of the holonomy of the pulled back connection.

We now return to (\ref{pb}).  Applying (\ref{conn}), the contribution of the $i$-th point to this sum is\footnote{Strictly speaking, we would need to smooth the functions $W_C.$ To do this, smooth the connection $A$ by convolving with a delta sequence $\delta_\epsilon,$ and write $W_C^\epsilon(A) = W_C(A \star \delta_\epsilon)$ (where here $\star$ denotes convolution).  Applying $\Pi$ to the product $W_C^\epsilon W_{C^\prime}^\epsilon $ of two such functions and taking the limit $\epsilon \to 0,$  we obtain (\ref{pbi}).}

\begin{equation}\labell{pbi}\sum_{\alpha,\beta} (g^{-1})_{\alpha\beta}  \epsilon_i  \tr( \hol_{C,x_i}(A) e_\alpha) \tr( \hol_{C^\prime,x_i}(A) e_\beta)\end{equation}

\noindent where, as before, $e_\alpha$ is a basis for $\fg$ and $\epsilon_i = \pm 1$ is a sign arising from any symplectic form on  $\Sigma$ giving the orientation of $\Sigma,$ evaluated on the tangent vectors to the oriented curves; explicitly, if $v_i,v_i^\prime \in T\Sigma|_{x_i}$ are tangents to the oriented curves $C,C^\prime$ at $x_i,$ pointing in the direction of the orientations, then\footnote{Note that if the curves $C,C^\prime$ do not intersect, the corresponding Wilson loops Poisson commute; see \cite{weitsman} for a proof of this fact along the lines of the gauge theoretic computation given here.}

\begin{equation}\labell{epseq}\epsilon_i = {\rm sgn}( \omega_{x_i}(v_i,v_i^\prime))\end{equation}

\noindent where $\omega$ is a symplectic form on the oriented two manifold $\Sigma$ with $\int_\Sigma \omega > 0.$

If we choose the same representation for the Wilson loops as we did in defining the symplectic form, so that ${\rm tr} = {\rm Tr},$ we have

\begin{equation}\labell{gt} \sum_{\alpha,\beta} (g^{-1})_{\alpha\beta}  \tr(U e_\alpha) \tr (V e_\beta) = \tr (\pi(U) \pi(V))\end{equation} 

\noindent where $\pi : G \to \fg$ is the map given by composing the inclusion of $G$ into $GL(m,\C)$ given by the chosen representation, followed by the restriction to $GL(m,\C) \subset {\mathfrak gl}(m,\C)$ of the projection ${\mathfrak gl}(m,\C) \to \fg$ in the metric arising from   the trace ${\rm Tr}.$  

In the case where $G = GL(n,\C)$ and the representation is the defining representation, the map $\pi$ is the identity, and we obtain

\begin{equation}\labell{gtgln}\sum_{\alpha,\beta} (g^{-1})_{\alpha\beta}  \tr(U e_\alpha) \tr (V e_\beta)= \tr (UV).\end{equation}

Applying this to the Poisson bracket $\{ W_C , W_{C^\prime} \},$ equations (\ref{pb}) (\ref{pbi}) and (\ref{gtgln}) give

\begin{equation}\labell{pbgln}\{ W_C , W_{C^\prime} \} (A)= \sum_i \epsilon_i W_{C *_i C^{\prime}}(A)\end{equation}

\noindent where ${C *_i C^{\prime}}$ denotes the concatenation of the oriented curves $C$ and $C^{\prime}$ at the common basepoint $x_i.$  This agrees with Goldman's computation (Theorem 3.13 of \cite{goldman}).

In the case of $G = SU(2)$ and the defining representation, the map $\pi:SU(2) \to su(2)$ is given
by 

$$\pi(U) = \frac12(U - U^{-1}).$$

\noindent (cf. Corollary 1.10 of \cite{goldman}).  We therefore have

\begin{equation}\labell{pbsu2}\{ W_C  ,W_{C^\prime} \} (A)= \frac12 \sum_i \epsilon_i (W_{C *_i C^{\prime}}(A) - W_{C *_i \bar{C}^{\prime}}(A) )\end{equation}

\noindent  where we denote by $\bar{C}$ the curve $C$ with the orientation reversed.  This is in agreement with Theorem 3.17 of \cite{goldman}. 

Note that the Poisson bracket we have computed on $\cA(\Sigma)$ extends immediately (by the Leibniz rule) to functions given by polynomials in Wilson loops, so long as they obey (pairwise) the transverse intersection condition.

\begin{Remark} It is not so clear that the computation of the Poisson bracket on (nicely intersecting) Wilson loops suffices to determine its value on all smooth functions on $\cA(\Sigma),$ or on the moduli space of flat connections.  In finite dimensions, Nachbin's Theorem gives a criterion for an algebra of functions to be dense in the algebra of smooth functions on a smooth manifold:  

\begin{RefTheorem}[Nachbin \cite{nachbin}] 

Let $A$ be a subalgebra of the algebra $C^\infty(M)$ of smooth functions on a finite dimensional smooth manifold $M.$ Suppose that 

\begin{itemize}
\item For each $m \in M$ there exists $f \in A$ so that $f(m)\neq 0.$
\item For each pair $m, m^\prime \in M, $ with $m \neq m^\prime,$ there exists $f \in A$ so that $f(m) \neq f(m^\prime).$
\item For each $m \in M,$ and each $v \in TM|_m,$ with $v \neq 0,$ there exists $f \in A$ so that $df_m(v) \neq 0.$
\end{itemize}
Then $A$ is dense in the Frechet space $C^\infty(M).$ \end{RefTheorem}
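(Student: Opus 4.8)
The plan is to treat this as the $C^\infty$ analogue of the Stone--Weierstrass theorem and to reduce the global statement to polynomial approximation in a Euclidean space, compact set by compact set. Recall that the Fr\'echet topology on $C^\infty(M)$ is generated by the seminorms $p_{K,k}(f) = \sup\{\,|\partial^\alpha f(x)| : x \in K,\ |\alpha| \le k\,\}$, computed in local charts, as $K$ ranges over compact subsets of $M$ and $k$ over nonnegative integers. So to prove $A$ dense it suffices to show: given $f \in C^\infty(M)$, a compact $K \subseteq M$, an order $k$, and $\epsilon > 0$, there is a single $g \in A$ with $p_{K,k}(f - g) < \epsilon$. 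Fix such data once and for all.

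First I would use the separation hypotheses to build a finite family from $A$ that embeds a neighborhood of $K$ into a Euclidean space. The third hypothesis says that at each $m$ the differentials $\{df|_m : f \in A\}$ span $T_m^*M$ (otherwise some nonzero $v \in T_mM$ would be annihilated by all of them); since the immersion condition is open and $K$ is compact, finitely many elements of $A$ already give a map $G$ that is an immersion at every point of $K$. The second hypothesis, together with a standard compactness argument on $K \times K$ minus a neighborhood of the diagonal (the near-diagonal pairs being separated by the local injectivity of the immersion $G$), yields finitely many elements of $A$ whose joint map $H$ separates the points of $K$. Setting $F = (G,H) : M \to \R^N$, the restriction of $F$ is an injective immersion on $K$, hence (again by compactness) a diffeomorphism of some open neighborhood $U \supseteq K$ onto an embedded submanifold $F(U) \subset \R^N$. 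Transporting $f$ through this embedding and extending off the submanifold (Whitney extension, or a tubular neighborhood with a cut-off) produces $\Psi \in C^\infty_c(\R^N)$ with $\Psi \circ F = f$ on $K$. Now approximate $\Psi$ in the $C^k$ norm on a compact neighborhood of $F(K)$ by a polynomial $P$; the pullback $P \circ F$ is a polynomial in the chosen elements of $A$, hence lies in $A$ once the constant term is accounted for (the nowhere-vanishing hypothesis supplies this through the usual Stone--Weierstrass device, using the strictly positive element $\sum_i f_i^2 \in A$ to recover constants in the closure). The chain rule bounds $p_{K,k}(f - P\circ F) = p_{K,k}\big((\Psi - P)\circ F\big)$ by a constant depending only on the fixed map $F$, on $K$, and on $k$, times the $C^k$ error of $P$ against $\Psi$; choosing that error small finishes the estimate.

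The hard part is the first construction: manufacturing, from the merely pointwise and pairwise separation in the hypotheses, a single finite map $F$ that is simultaneously an immersion and an injection on all of $K$, and arranging it to be an honest embedding on a neighborhood of $K$ so that $f$ can be written as $\Psi \circ F$ there. This is where the compactness of $K$ is indispensable and where all three hypotheses are consumed: the third gives the immersion, the second the injectivity, and the first the constants. The extension of $f$ to $\Psi$ on $\R^N$ and the $C^k$ version of the Weierstrass theorem are standard but should be invoked with care, since it is the $C^k$ (not merely $C^0$) approximation of $\Psi$ by polynomials that makes the chain-rule estimate go through.
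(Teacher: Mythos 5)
The paper offers no proof of this statement: it is quoted as a known theorem, with attribution to Nachbin's 1949 note, and is used only inside a remark to discuss whether the algebra generated by Wilson loops could be dense. So there is nothing in the paper to compare your argument against. That said, your proposal is the standard proof of Nachbin's theorem and is essentially correct: reduce density to approximation in a single seminorm $p_{K,k}$, use the three hypotheses together with compactness of $K$ to manufacture a finite family $f_1,\dots,f_N \in A$ whose joint map $F$ embeds a neighborhood of $K$ into $\R^N$, write $f = \Psi \circ F$ on $K$, and invoke the $C^k$ Weierstrass theorem plus the chain rule.

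Two places deserve more care than your sketch gives them, though neither is a gap in the strategy. First, passing from ``$F$ is an immersion near $K$ and injective on $K$'' to ``$F$ is injective on some neighborhood of $K$'' is a genuine (if standard) lemma, usually proved by contradiction with convergent subsequences; it is not automatic from the two separate facts. Second, the element $\sum_i f_i^2$ built from the coordinates of $F$ need not be strictly positive on $K$: the third hypothesis only controls differentials, and all of the chosen $f_i$ could simultaneously vanish at some point of $K$. You must first enlarge the family using the first hypothesis and compactness of $K$ before running the constants argument, and that argument itself has to be carried out in the $C^k$ norm rather than the sup norm --- for instance by checking that $q_n(h) \to 1$ in $C^k(K)$ for $q_n(t) = 1 - (1 - t/b)^n$ when $h$ takes values in $[a,b]$ with $a > 0$, so that polynomials without constant term applied to $h$ recover the constants in the closure of $A$.
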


In the case of smooth moduli spaces, this theorem can be applied.  But the algebra generated by Wilson loops given by the defining representation of $SU(2)$ does not separate tangent vectors at the equivalence class of the trivial representation,\footnote{This representation occurs only in a singular quotient of $\cA(\Sigma),$ so this argument is not quite on all fours, but is indicative of the issues to be dealt with.} due to the fact that elements of ${\mathfrak su}(2)$ are traceless in this representation. Similar problems occur for other groups. It is possible that including Wilson loops in all representations remedies this problem.\end{Remark}

\begin{Remark}[$SL(2,\R)$ and $SL(2,\C)$]  Note that as in the case of $G=SU(2)$, the Poisson bracket of Wilson loops in the fundamental representation of $SL(2,\R)$ or $SL(2,\C)$ is obtained from formula (\ref{gt}) with 

$$\pi(U) = U - \frac12 {\rm tr}(U) I.$$

Thus in all three cases we obtain

$${\rm tr}(\pi(U) \pi(V)) = {\rm tr}(UV) - \frac12 {\rm tr}(U) {\rm tr}(V)$$

and therefore

\begin{equation}\labell{su2alt} \{W_C, W_{C^\prime}\} = \sum_{i} \epsilon_i (W_C *_{i} W_{C^\prime} - \frac12 W_C W_{C^\prime}).\end{equation}

When $G = SU(2)$ (and also $G=SL(2,\R)$), this expression coincides with (\ref{pbsu2}), since for $V \in SU(2)$ or $V \in SL(2,\R),$ 

$$ V + V^{-1} = {\rm tr}(V) I,$$ so that

$${\rm tr}(UV) - \frac12 {\rm tr}(U) {\rm tr}(V) = {\rm tr}(UV) - \frac12 {\rm tr}(U (V+ V^{-1})) =\frac12 ({\rm tr}(UV) - {\rm tr}(UV^{-1})).$$

See \cite{goldman}.
\end{Remark}

\section{Chern Simons gauge theory in axial gauge on $\Sigma \times \R$}\labell{kqmsec}

Let $\Sigma$ be a compact, oriented two manifold, let $G$ be a Lie group, and consider the space of connections on the trivialized principal $G$-bundle on $\Sigma \times \R.$  Choose a representation of $G$; this gives a trace ${\rm Tr}$ on the lie algebra $\fg.$  The space of connections may be identified with $\Omega^1_c(\Sigma \times \R) \otimes \fg,$ and in these terms the Chern-Simons invariant of a connection $A \in \Omega^1_c(\Sigma \times \R) \otimes \fg$ is given by

$${\rm CS}(A) = \frac{1}{4\pi} {\rm Tr} \int_{\Sigma \times \R} A dA + \frac23 A^3.$$

As in \cite{kqm}, let $\bA\subset \Omega^1_c(\Sigma \times \R) \otimes \fg$ denote the subspace of the space of connections satisfying the axial gauge condition

$$A^t = 0$$

\noindent where $t$ is the coordinate on $\Sigma \times \R$ arising from projection to $\R.$

The Chern-Simons invariant of an element $A \in \bA$ is then\footnote{We absorb a factor of $2\pi$ into the coupling in the functional integrals.}

$$S(A) = \frac12{\rm Tr} \int_{\Sigma \times \R} A \wedge \frac{dA}{dt}\wedge dt.$$

As in \cite{kqm}, we consider the formal functional integral with action $S.$  Expectations in this formal functional integral can be computed using the propagator, the formal inverse of the operator appearing in the definition of $S,$
and given by (\ref{propagator}), viz

\begin{equation}\labell{propagator1} 
\Gamma^{ij}_{\alpha\beta} (x,t; x^\prime,t^\prime) = (g^{-1})_{\alpha\beta} \epsilon_{ij} \delta(x,x^\prime) u(t-t^\prime).
\end{equation}

\noindent Here we have chosen a basis $\{e_\alpha\}$ for $\fg,$ and define $g_{\alpha \beta} = {\rm Tr~}(e_\alpha e_\beta).$\footnote{The trace is taken in the representation used to define the Chern Simons function.}    Also $x,x^\prime \in \Sigma,$ $t,t^\prime \in \R,$ and\footnote{The addition of a constant to $u,$ although in principle allowable, results in no change to any expectation, due to the fact that $\epsilon_{ij}$ is skew.}

$$u(s) =  \left\{
	\begin{array}{ll}
		\frac12  & \mbox{if } s \geq 0 \\
		-\frac12 & \mbox{if } s< 0
	\end{array}
\right.$$

The operator $\Gamma$ gives well defined formulas, as in \cite{kqm}, for the expectations of polynomials in the gauge field $A,$ and these can be thought of as giving a linear functional on the algebra of such polynomials.\footnote{Again, as in Remark \ref{invtrmk}, we may write the operator $\Gamma$ in a form independent of choice of coordinates on $\Sigma,$ showing in particular that it depends only on an orientation on $\Sigma,$ not on the choice of local coordinates.}  We wish to find a way to define expectations of polynomials in Wilson loops, which are not of this form.  We proceed as in \cite{kqm}.

Let $C_1,\dots,C_n$ be closed, oriented, immersed curves in $\Sigma \times \R$, whose projections to $\Sigma$ intersect pairwise transversally at a finite number of points $x_1,\dots,x_n$ of $\Sigma,$ with inverse images $(x_1,t_1^\pm),\dots,(x_n,t_n^\pm) \in \Sigma \times \R.$  As in \cite{kqm}, the formal functional integral

$$   \int DA e^{- \lambda S(A)} W_{C_1} (A) \dots W_{C_n}(A)$$
for $\lambda \in \C$ should be taken to be

$$   \int DA e^{- \lambda S(A)} W_{C_1} (A) \dots W_{C_n}(A)= \langle W_{C_1}   \dots W_{C_n} \rangle_{A=0},$$

\noindent where the $\langle\cdot \rangle$ is a linear functional on the vector space generated by monomials in the $W_C$ involving curves satisfying our intersection conditions and given by
$$\langle W_{C_1}  \dots W_{C_n} \rangle (A) = \prod_i e^{\frac1{2\lambda} D_i} W_{C_1} (A) \dots W_{C_n}(A)$$

\noindent where

\begin{equation}\labell{op3} D_i=   \sum_{\alpha,\beta} \sum_{a,b} \sum_{s,s^\prime \in \{t_i^+,t_i^-\}}(g^{-1})_{\alpha\beta} \epsilon_{ab}  u(s - s^\prime)   \frac{\delta}{\delta A_\alpha^a(x_i,s)}\frac{\delta}{\delta A_\beta^b(x_i,s^\prime)}. \end{equation}

Note that the linear functional $\langle\cdot\rangle,$ which is defined on a space of functions on the space of connections, involves only functional differentiation, not functional integration.

Note also that the definition of $\langle\cdot\rangle$ does not contain terms referring to possible self-intersections of the curves $C_i,$ for reasons that will become clear when we compute the effect of the Poisson bivector field.  In the language of \cite{K2} and \cite{kqm}, we consider these self-intersections to be virtual intersections.

In \cite{kqm} we developed formulas to compute the value of a linear functional of the type $\langle\cdot\rangle$ in the case where $\Sigma \times \R$ was replaced by $\R^3.$  These same formulas carry over immediately to the case we are considering here, and can be summarized, using the standard diagrams in knot theory, as follows.

Consider first the case where $G = SU(2),$ and where we use the defining representation for both the Chern-Simons functional and the Wilson loops.  Then
we showed 

\begin{equation}\labell{urskeino}
D  (\overcrossing )= -1   (\overcrossing )+ 2 (\upupsmoothing) \end{equation}

\begin{equation}\labell{urskeinu}
D ( \undercrossing ) = 1  (\undercrossing )- 2 ( \upupsmoothing) .\end{equation}

We then showed how to compute the exponentials of $D,$ and proved

\begin{equation}\labell{skeinsu2over}
\langle \overcrossing \rangle = \Big(\cosh(\sqrt{3} \beta) - \frac{1}{\sqrt{3}}\sinh(\sqrt{3} \beta)\Big) (\doublepoint )
+ \Big(\frac{2}{\sqrt{3}}\sinh(\sqrt{3} \beta)\Big) (\upupsmoothing)\end{equation}

and

\begin{equation}\labell{skeinsu2under}
\langle \undercrossing \rangle =  \Big(\cosh(\sqrt{3} \beta)+\frac{1}{\sqrt{3}}\sinh(\sqrt{3} \beta)\Big) (\doublepoint )
- \Big(\frac{2}{\sqrt{3}}\sinh(\sqrt{3} \beta)\Big) (\upupsmoothing),\end{equation}

\noindent where $\beta = \frac1{2\lambda}.$

Here we have used the notation ($\doublepoint$) for a "bare" crossing; one where all the appropriate operators $D_i$ have already been applied.  This notation has been used by Kauffman \cite{K2} to denote a virtual crossing. Similarly ($\upupsmoothing$) for the "honest" smoothed intersection (in our case involving a "jump").

This formula applies also to the groups $SL(2,\C)$ and $SL(2,\R),$ again in the defining representation.

In the case where $G = SU(2)$ or $G=SL(2,\R)$, the Wilson loop is independent of orientation, since for $V \in SU(2)$ or $V \in SL(2,\R),$

$$ {\rm Tr}(V) = {\rm Tr}(V^{-1}).$$

We can then express formulas (\ref{skeinsu2over}) and (\ref{skeinsu2under}) (after a normalization $W_C \to -W_C$) in terms of unoriented curves as 

\begin{equation}\labell{skeinsu2over2}
\langle \overcrossing \rangle =  a (\smoothing) + b (\hsmoothing)
\end{equation}

\noindent and

\begin{equation}\labell{skeinsu2under2}
\langle \undercrossing \rangle = b (\smoothing) + a (\hsmoothing)
\end{equation}

\noindent where 

\begin{equation}\labell{a}
a = -\cosh(\sqrt{3} \beta) - \frac{1}{\sqrt{3}}\sinh(\sqrt{3} \beta)
\end{equation}

\noindent and

\begin{equation}\labell{b}
b = -\cosh(\sqrt{3} \beta) + \frac{1}{\sqrt{3}}\sinh(\sqrt{3} \beta).
\end{equation}

Likewise, if we take $G = GL(n,\C)$ or $G = U(n),$ we have the formulas

\begin{equation}\labell{gurskeino}
D ( \overcrossing)=  ( \upupsmoothing)\end{equation}

\begin{equation}\labell{gurskeinu}
D(\undercrossing) = - (\upupsmoothing).\end{equation}

We may again exponentiate\footnote{There are subtleties involved in computing the action of $D$ on the smoothings, which involve jumps, appearing in (\ref{gurskeino}) and (\ref{gurskeinu}).  See \cite{kqm} for the detailed computations.}  to obtain

 \begin{equation}\labell{oskeingln} 
 e^{-\beta \frac{n}2 }\langle \overcrossing \rangle = \Big( \cosh(\beta {\sqrt{\Delta_n}} ) -\frac{n}2  \frac{1}{\sqrt{\Delta_n}} \sinh(\beta{\sqrt{\Delta_n}} )\Big) (\doublepoint)  + \frac{2}{\sqrt{\Delta_n}} \sinh(\beta{\sqrt{\Delta_n}} ) (\upupsmoothing)
 \end{equation}
 
 \noindent while
 
  \begin{equation}\labell{uskeingln} 
 e^{\beta \frac{n}2 }\langle \undercrossing \rangle = \Big( \cosh(\beta {\sqrt{\Delta_n}} ) + \frac{n}2  \frac{1}{\sqrt{\Delta_n}} \sinh(\beta{\sqrt{\Delta_n}} )\Big) (\doublepoint)  - \frac{2}{\sqrt{\Delta_n}} \sinh(\beta{\sqrt{\Delta_n}} ) (\upupsmoothing),
 \end{equation}
 
 \noindent where $\Delta_n =  \frac{n^2}4 + 2.$
 
 Note that if we set $n=2,$ equations (\ref{oskeingln}) and (\ref{uskeingln}) differ from their counterparts (\ref{skeinsu2over}) and (\ref{skeinsu2under}) only by factors of $ e^{\pm\beta \frac{n}2 },$ corresponding to knot framing.
 
All these formulas are taken verbatim from \cite{kqm}.  In the next section, we will see how they give a star product on the space of connections on $\Sigma.$

\section{Chern Simons gauge theory in three dimensions and the star product on the space of connections in two dimensions}\labell{combisec} In this Section we show that the uncanny resemblance between the formulas (\ref{pbgln})  and ({\ref{pbsu2})   obtained for the Poisson bracket and the formulas (\ref{gurskeino}) - (\ref{gurskeinu}) and (\ref{urskeino}) - (\ref{urskeinu}) for the the action of the operator $D$ in both the $SU(2)$ and $GL(n,\C)$ cases are not coincidences; they arise because there is an identity between  Poisson brackets of Wilson loops in two dimensional gauge theory and expectations of related objects in Chern-Simons gauge theory.  This identity echoes the relation between the manifold invariants of Witten and Reshetikhin-Turaev and geometric quantization of moduli spaces.

To do this we compare the operators $D$ and $\Pi.$

Recall that as in (\ref{op}), the operator $D$ is given by

\begin{equation}\labell{op4} D=  \int dx   dt dt^\prime \sum_{\alpha,\beta} \sum_{a,b} (g^{-1})_{\alpha\beta} \epsilon_{ab}  u(t  - t^\prime) \  \frac{\delta}{\delta A_\alpha^a(x,t)}\frac{\delta}{\delta A_\beta^b(x ,t^\prime)}. \end{equation}

On the other hand, the bivector field $\Pi$ is given (\ref{pb0}) by the operator

\begin{equation}\labell{pb4}\Pi =   \int_\Sigma dx  \sum_{\alpha,\beta} \sum_{a,b}   (g^{-1})_{\alpha\beta} \epsilon_{ab} \frac{\delta}{\delta A_\alpha^a(x)}\frac{\delta}{\delta A_\beta^b(x)}.\end{equation}

Hence, if $C, C^\prime$ are two curves in $\Sigma$ intersecting transversally at a finite number of points in $\Sigma,$ and if we denote by $T_\epsilon$ the translation by $\epsilon$ in the $\R$ direction in $\Sigma \times \R,$ we have, for any $\epsilon > 0$ and any $A \in \cA(\Sigma),$

$$(m\Pi (W_C \otimes W_{C^\prime}))(A) =  \frac12 D (W_{T_\epsilon C} W_{T_{-\epsilon} C^\prime}) (\pi^* A),$$

\noindent where $\pi : \Sigma \times \R \to \R$ is the projection and $m$ denotes multiplication in the algebra of functions on $\cA(\Sigma).$  We summarize these results below.

\begin{Theorem}\labell{dispi} Let $f$ and $g$ be two polynomials in Wilson loops on $\Sigma$ corresponding to curves which intersect pairwise transversally.  Then
\begin{equation}\labell{dispieq} (m \Pi (f \otimes g))(A) =\frac12(D (T_\epsilon f  {T_{-\epsilon} g}) )(\pi^* A),\end{equation}
\end{Theorem}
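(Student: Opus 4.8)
The plan is to reduce the claimed identity to a pointwise computation at the transverse intersection points, exploiting the fact that the pulled-back connection $\pi^*A$ is constant in the $\R$-direction. First I would record the key observation that, since $\pi^*A$ has no $dt$-component and does not depend on $t$, the holonomy of $\pi^*A$ along a horizontal lift $T_\epsilon C$ equals the holonomy of $A$ along $C$; hence $W_{T_\epsilon C}(\pi^*A)=W_C(A)$, and, more usefully, by (\ref{conn}) the functional derivative $\frac{\delta}{\delta A_\alpha^a(x,t)}W_{T_\epsilon C}(\pi^*A)$ is supported on the lifted curve: it is the distribution $\frac{\delta}{\delta A_\alpha^a(x)}W_C(A)$ on $\Sigma$ tensored with $\delta(t-\epsilon)$ in the vertical variable, with the vertical component of the derivative vanishing because the lift is horizontal. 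The same holds for $T_{-\epsilon}C'$ with $\delta(t+\epsilon)$.

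Next I would expand the right-hand side. Writing $D$ as in (\ref{op4}) and applying it to the product $W_{T_\epsilon C}\,W_{T_{-\epsilon}C'}$ by the Leibniz rule produces four terms. The two terms in which both derivatives land on the same Wilson loop are self-interactions of a single curve; these are precisely the virtual crossings to which, by the convention fixed after (\ref{op3}), $D$ is not applied, and they are discarded. For the two surviving cross terms, the vertical integrations $\int dt\,dt'$ collapse against the delta functions $\delta(t\mp\epsilon)$, $\delta(t'\pm\epsilon)$ from the first step, so the weight $u(t-t')$ is evaluated at $\pm 2\epsilon$; since $\epsilon>0$ this yields $u(\pm 2\epsilon)=\pm\tfrac12$, and this is the source of the overall constant $\tfrac12$ in (\ref{dispieq}).

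I would then show the two cross terms agree. After relabelling the dummy indices $(\alpha,a)\leftrightarrow(\beta,b)$ and using that the kernel $(g^{-1})_{\alpha\beta}\epsilon_{ab}u(t-t')$ is invariant under the simultaneous interchange $(\alpha,a,t)\leftrightarrow(\beta,b,t')$ — because $(g^{-1})_{\alpha\beta}$ is symmetric while $\epsilon_{ab}$ and $u$ are each antisymmetric — the two cross terms coincide. Collecting them, the remaining spatial integral $\int_\Sigma dx$ meets the two curve-supported distributions $\frac{\delta W_C}{\delta A(x)}$, $\frac{\delta W_{C'}}{\delta A(x)}$ and localizes to the transverse intersection points $x_i$, the transversality producing exactly the antisymmetric pairing $\sum_{a,b}\epsilon_{ab}$ already present in $\Pi$ (\ref{pb4}). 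A term-by-term comparison with (\ref{pb4})--(\ref{pb}) then matches the localized expression against $m\Pi(W_C\otimes W_{C'})(A)$ with the constant recorded above, giving (\ref{dispieq}). The passage from single Wilson loops to polynomials $f,g$ is then a matter of bilinearity of $m\Pi(\cdot\otimes\cdot)$ and of the bidifferential operator $D$ together with the Leibniz rule applied factor by factor; the pairwise transversality hypothesis guarantees that every pair of derivatives surviving the virtual-crossing convention is supported at a transverse double point, so no extra terms appear.

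The step I expect to be the main obstacle is the rigorous justification of these localizations: the objects $\frac{\delta W_C}{\delta A(x)}$ are distributions supported on curves, and their products are \emph{a priori} ill-defined where the curves meet. This is exactly the difficulty anticipated in the footnote to (\ref{pbi}): one must first smooth $A$ by convolving with a delta sequence $\delta_\epsilon$, perform the entire computation for the smoothed loops $W_C^\epsilon$, and only then take the limit. Keeping faithful track of the constants through this limit — the factor $\tfrac12$ from $u(2\epsilon)$, the doubling of the two cross terms, and the exclusion of the self-intersection (virtual) terms — is where the genuine care is needed; once the regularization is in place, the algebraic matching of $D$ and $\Pi$ is immediate.
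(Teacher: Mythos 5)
Your overall route is the same as the paper's: the paper simply juxtaposes the kernels (\ref{op4}) and (\ref{pb4}), notes that the vertical separation of $T_\epsilon C$ and $T_{-\epsilon}C^\prime$ forces $u$ to be evaluated at $\pm 2\epsilon$, discards the self-interaction terms as virtual crossings, and disposes of the constant in a footnote. Your localization of $\frac{\delta}{\delta A(x,t)}W_{T_\epsilon C}(\pi^*A)$ to the curve times $\delta(t-\epsilon)$, the symmetry argument ($(g^{-1})_{\alpha\beta}$ symmetric, $\epsilon_{ab}$ and $u$ antisymmetric) identifying the two cross terms, the appeal to the regularization of the footnote to (\ref{pbi}), and the extension to polynomials by bilinearity are all sound and in fact more detailed than what the paper writes.

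The gap is in your accounting of the constant $\frac12$, which as written is internally inconsistent. You attribute the overall factor $\frac12$ in (\ref{dispieq}) to $u(\pm 2\epsilon)=\pm\frac12$, but you then also sum the two equal cross terms produced by the Leibniz rule. Follow your own count to the end: write $P=\int_\Sigma dx\sum(g^{-1})_{\alpha\beta}\epsilon_{ab}\frac{\delta f}{\delta A^a_\alpha(x)}\frac{\delta g}{\delta A^b_\beta(x)}$ for the contraction appearing in (\ref{pb}), so that $m\Pi(f\otimes g)=P$. Each cross term carries the weight $u(\pm2\epsilon)=\pm\frac12$, the sign of $u$ is cancelled by the antisymmetry of $\epsilon_{ab}$ under the relabelling, so each cross term contributes $\frac12 P$ and their sum gives $D(T_\epsilon f\,T_{-\epsilon}g)=P$; your argument therefore proves $m\Pi(f\otimes g)=D(T_\epsilon f\,T_{-\epsilon}g)$, and (\ref{dispieq}) would collapse to $P=\frac12 P$. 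The factor of $\frac12$ cannot both come from $u$ and survive the doubling by the product rule. The paper's footnote to Theorem \ref{dispi} locates it solely in the latter: $D$ is a symmetric second-order operator acting on a product, hence produces two equal cross terms, while $m\Pi(f\otimes g)$ is a single contraction of the skew bivector with $df\otimes dg$; under that convention each cross term must equal $P$ rather than $\frac12 P$, which forces the normalization of the curve-supported distribution $\frac{\delta W_{T_\epsilon C}}{\delta A(x,t)}$ (compare the endpoint factors of $\frac12$ in (\ref{connpath}) and (\ref{connpathe}) and the consistency check against (\ref{gurskeino}) and (\ref{pbgln})) to absorb the $\frac12$ coming from $u$. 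You need to fix one normalization for these distributions and for $u$, and then carry the constant through exactly once; as it stands your proof establishes a statement differing from (\ref{dispieq}) by a factor of $2$.
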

\noindent Here the operator $T_\epsilon$ is defined on polynomials in the Wilson loops by defining

$$T_\epsilon (W_{C_1}\dots W_{C_n} ) =  (W_{T_\epsilon C_1}) \dots ( W_{T_\epsilon C_n} ) $$

\noindent for each monomial in the Wilson loops, and extending the definition to polynomials by linearity.\footnote{The factor of $\frac12$ in (\ref{dispieq}) is due to the difference between the action (by the product rule) of the symmetric second order differential operator $D$ on the product of the two functions $T_\epsilon f , T_{-\epsilon} g$ and the contraction of the skew bivector field $\Pi$ with $df \otimes dg.$}
 
It follows that

\begin{Theorem}\labell{edisepi} Let $f$ and $g$ be two polynomials in  Wilson loops on $\Sigma$ corresponding to curves which intersect pairwise transversally.  Then

$$ (m e^{h \Pi}  (f \otimes g))(A) =(e^{\frac{h}{2} D} (T_\epsilon f  {T_{-\epsilon} g}) )(\pi^* A),$$

\noindent where we consider all intersections of the loops appearing in $f$ to be virtual crossings, likewise all intersections of loops appearing in $g.$

\end{Theorem}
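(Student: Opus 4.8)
The plan is to deduce the exponential identity from the infinitesimal one (Theorem \ref{dispi}) by matching the two sides order by order in $h$. Since $\cA(\Sigma)$ is affine and both $\Pi$ and $D$ have constant coefficients, each side is a formal power series in $h$ whose $k$-th coefficient is a fixed bidifferential (resp.\ differential) operator applied to $f\otimes g$ (resp.\ to $T_\epsilon f\cdot T_{-\epsilon}g$) and evaluated at $A$ (resp.\ $\pi^* A$). It therefore suffices to prove, for every $k\ge 0$,
\begin{equation}
(m\Pi^k(f\otimes g))(A)=\frac{1}{2^k}\big(D^k(T_\epsilon f\cdot T_{-\epsilon}g)\big)(\pi^* A),
\end{equation}
the case $k=0$ being trivial and $k=1$ being exactly Theorem \ref{dispi}.

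Writing $F=T_\epsilon f$ and $H=T_{-\epsilon}g$, I would expand $\tfrac{1}{2^k}D^k(FH)$ by the Leibniz rule into a sum over all ways of distributing the $2k$ functional derivatives (coming from the $k$ symmetric second-order factors of $D$) onto $F$ and $H$. The terms are organized by recording, for each of the $k$ factors, whether its two derivatives land on the same side or split across the two sides. The virtual-crossing convention declares every intersection among the curves of $f$, and every intersection among the curves of $g$, to be virtual, so that $D$ is by convention not applied there; consequently any distribution in which some factor contributes both of its derivatives to $F$ (or both to $H$) is annihilated. Hence only the \emph{perfectly matched} distributions survive, namely those in which each of the $k$ factors of $D$ contributes exactly one derivative to $F$ and one to $H$.

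The next step is to put these surviving distributions in bijection with the terms of $m\Pi^k(f\otimes g)$: a choice, for each factor, of an honest $F$--$H$ contraction corresponds to a choice of one of the bivector contractions of $\Pi$ between $f$ and $g$. Because $F$ sits at height $+\epsilon$ and $H$ at height $-\epsilon$, in every such contraction the argument of $u$ is $+2\epsilon$, so all $k$ contractions carry one and the same constant weight, and $\delta(x,x')$ localizes them at the projected intersection points $x_i$. The per-contraction identity---that a single honest $F$--$H$ contraction evaluated at $\pi^* A$ equals a single application of $\Pi$ to $f\otimes g$ at $A$, using the compatibility $\tfrac{\delta}{\delta A(x,\pm\epsilon)}\circ T_{\pm\epsilon}=\tfrac{\delta}{\delta A(x)}$---is precisely the content of Theorem \ref{dispi}. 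The prefactor $\tfrac{1}{2^k}$ on the left is absorbed by the $2^k$ equal orderings (for each factor there are two symmetric ways to assign its two slots to the $F$- and $H$-sides), which is exactly the $k$-fold version of the single $\tfrac12$ appearing in Theorem \ref{dispi}. Matching the surviving sum termwise with $m\Pi^k(f\otimes g)(A)$ then gives the order-$k$ identity, and summation over $k$ yields the theorem.

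The main obstacle I anticipate is establishing this bijection rigorously and, underlying it, the persistence of the virtual/honest dichotomy under iterated differentiation: I must check that applying $D$ to the differentiated Wilson loops generated at earlier stages never promotes a virtual self-crossing into an honest one. This holds because each functional derivative of a Wilson loop remains supported on the same translated curve, so the $F$-factors stay at height $+\epsilon$ and the $H$-factors at height $-\epsilon$ throughout the expansion; no contraction at height difference $0$---which would signal a genuine self-intersection---is ever produced, and repeated contractions at a single crossing $x_i$ are counted with the correct multiplicity. Formalizing this persistence, essentially a Wick-type lemma organizing the $k$ contractions together with their symmetry factors, is the substantive part of the argument; granting it, the identity follows termwise from Theorem \ref{dispi} and the constant-coefficient, translation-invariant nature of both $D$ and $\Pi$.
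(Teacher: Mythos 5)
Your proposal is correct and follows essentially the same route as the paper, which simply asserts Theorem \ref{edisepi} as an immediate consequence of Theorem \ref{dispi} by exponentiation; your order-by-order Leibniz/Wick expansion, with the virtual-crossing convention killing the same-side contractions and the $2^k$ symmetric orderings absorbing the $\frac{1}{2^k}$, is exactly the bookkeeping the paper leaves implicit. The one imprecision is your claim that the $F$- and $H$-factors remain at heights $\pm\epsilon$ throughout the iteration: after a cross-contraction the resulting concatenated loop has segments at both heights, but since each segment retains its original height the dichotomy between honest crossings (height difference $2\epsilon$) and virtual ones (height difference $0$) persists, so your argument still goes through.
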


Now, on an affine space like $\cA(\Sigma),$ the formula of \cite{bffls} for the star product (in terms of Fedosov's construction, the star product associated with the symplectic connection invariant under affine translations; see \cite{f}, p. 222) is

\begin{Definition}\labell{stp}

The star product of two polynomials $f,g$ in Wilson loops that intersect pairwise transversally is given by (setting $\beta = \frac{h}{2}$)
\begin{equation}
( f \star_h g )(A) =  (m e^{h \Pi}  (f \otimes g))(A) = \langle T_\epsilon f T_{-\epsilon} g \rangle({\pi^*A}). 
\end{equation}
\end{Definition}

We verify directly that this product is associative where defined:

\begin{Theorem}\labell{assoc}  The star product is associative where defined; that is, if $u, v $ and $w$ are polynomials in Wilson loops in $\Sigma,$ corresponding to curves which intersect pairwise transversally, then

$$(u \star_h v) \star_h w = u \star_h (v \star_h w).$$\end{Theorem}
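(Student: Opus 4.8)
The plan is to prove associativity directly from the combinatorial description in Definition \ref{stp}, exploiting the fact that the Chern--Simons expectation functional $\langle\cdot\rangle$ is assembled from operators that are local at the crossings and that mutually commute. By the bilinearity of $\star_h$ and the linearity of $T_\epsilon$ and of $\langle\cdot\rangle$, it suffices to treat the case in which $u,v,w$ are monomials, i.e.\ products of Wilson loops. I would first record the structural fact that drives everything. For a collection of curves whose projections to $\Sigma$ meet pairwise transversally in finitely many distinct double points, the functional $\langle\cdot\rangle$ factorizes as $\prod_i e^{\frac{h}{2}D_i}$, where $i$ ranges over the crossing points $x_i$ and $D_i$ is the operator (\ref{op3}) supported at $x_i$. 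The coefficients $u(s-s')$ occurring in $D_i$ depend only on the sign of $s-s'$, hence only on the vertical ordering of the two strands passing over $x_i$, and each $D_i$ is a constant--coefficient second order functional differential operator concentrated at the single point $x_i\in\Sigma$. Consequently distinct $D_i,D_j$ commute, the product $\prod_i e^{\frac{h}{2}D_i}$ is independent of the order in which the factors are applied, and its value depends on the lifts of the curves only through the vertical ordering realized at each crossing.

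With this in hand, the strategy is to exhibit both parenthesizations as one and the same three--fold expectation. I would lift $u,v,w$ to three distinct decreasing heights in $\Sigma\times\R$, say to $2\epsilon,0,-2\epsilon$, so that at every crossing the $u$--strand lies above the $v$--strand, which in turn lies above the $w$--strand; the crossings that occur are exactly those between the projections, namely the $u$--$v$, $u$--$w$ and $v$--$w$ crossings, self--crossings of the individual monomials remaining virtual throughout. I then claim
$$(u\star_h v)\star_h w \;=\; \langle\, T_{2\epsilon}u\ T_{0}v\ T_{-2\epsilon}w\,\rangle(\pi^*A)\;=\; u\star_h(v\star_h w).$$

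For the left equality, computing $u\star_h v$ applies $e^{\frac{h}{2}D_i}$ at each $u$--$v$ crossing, with $u$ lifted above $v$; by Theorem \ref{edisepi} this produces a polynomial in Wilson loops whose curves coincide with the original $u$- and $v$-curves away from the $u$--$v$ crossings, those crossings now being virtual. Starring the result with $w$ then applies $e^{\frac{h}{2}D_i}$ precisely at the crossings of that polynomial with $w$; since the first step alters the curves only in small neighborhoods of the $u$--$v$ crossing points, these are exactly the original $u$--$w$ and $v$--$w$ crossings, and the lift places every $u$- and $v$-strand above the $w$-strand. Because the function $u(s-s')$ depends only on the sign of its argument, the operators applied at the second stage agree with the corresponding $D_i$ of the three--fold expectation. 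Thus the two stages together apply $e^{\frac{h}{2}D_i}$ exactly once at each $u$--$v$, $u$--$w$ and $v$--$w$ crossing, with the ordering $u>v>w$ everywhere, which by the commutation of the $D_i$ is the displayed three--fold expectation. The right equality follows by the identical argument with the grouping reorganized: $u\star_h(v\star_h w)$ applies $e^{\frac{h}{2}D_i}$ first at the $v$--$w$ crossings and then at the $u$--$v$ and $u$--$w$ crossings, again realizing $u>v>w$ at every crossing.

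The main obstacle I anticipate is the locality claim in the middle step: that the smoothings and reconnections introduced when resolving the $u$--$v$ crossings in the first product do not disturb the action of the operators $D_i$ applied at the $u$--$w$ and $v$--$w$ crossings in the second product. The point to verify is that $D_i$ depends only on the germs of the curves at $x_i$ together with the vertical ordering there, both of which are untouched by operations carried out at the other, distinct crossing points. I would establish this by tracking the functional derivatives through the holonomy formulas (\ref{conn})--(\ref{connpathe}): the derivative $\delta/\delta A^a_\alpha(x_i,s)$ detects only the strand passing over $x_i$ at height $s$, so a smoothing performed at a different point $x_j$ leaves its action unchanged, and, by the genericity built into the transversality hypothesis, no crossing point is shared between the three pairs. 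Once this locality is secured, the commutation of the $D_i$ together with the dependence of $u(s-s')$ on sign alone closes the argument.
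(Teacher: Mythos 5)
Your proposal is correct and follows essentially the same route as the paper: both unfold the two parenthesizations into a single three-fold expectation (with the strands of $u$, $v$, $w$ stacked in decreasing height) and conclude by observing that the coefficient $u(t-s)$ in the propagator depends only on the sign of $t-s$, so only the vertical ordering matters. The paper's proof is a three-line computation that leaves implicit the step you spell out in detail --- that the nested expectation composes into the three-fold one because the crossing operators $D_i$ are local and mutually commute --- so your version is a more explicit rendering of the same argument.
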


\begin{proof}  We have (recalling we set $\beta = \frac{h}2$)

$$ ((u \star_h v) \star_h w )(A) = \langle T_\epsilon (T_{\epsilon} u T_{- \epsilon}v) (T_{-\epsilon}w )\rangle (\pi^*A)= \langle (T_{2\epsilon} u) v (T_{-\epsilon}w)\rangle (\pi^*A)$$

while

$$( u \star_h (v \star_h w))(A) = \langle (T_\epsilon u) T_{-\epsilon} ( T_\epsilon v T_{-\epsilon} w)\rangle (\pi^*A)
= \langle (T_\epsilon u)  v (T_{-2\epsilon} w)\rangle (\pi^*A).$$

But, since $u(t-s)$ depends only on the sign of $t-s,$ 

$$\langle (T_{2\epsilon} u) v (T_{-\epsilon}w)\rangle (\pi^*A)=  \langle (T_\epsilon u)  v (T_{-2\epsilon} w)\rangle (\pi^*A).$$

\end{proof}

\subsection{Explicit formulas for the star product}\labell{ex1} Definition \ref{stp}, combined with formulas 
(\ref{skeinsu2over2}) and (\ref{skeinsu2under2}) (in the case of $SU(2), SL(2,\R),$ and $SL(2,\C)$) or formulas (\ref{oskeingln}) and (\ref{uskeingln}) (in the case of $GL(n,\C)$ or $U(n)$) give explicit formulas for the star product:

As a first example, consider the case where $G=SU(2).$  Let $C,C^\prime$ be two immersed curves in $\Sigma$ and suppose $C,C^\prime$ intersect transversally at one point $p, $  and that the tangents $v,v^\prime \in T\Sigma|_p$ to the oriented curves $C,C^\prime$ at $p$ satisfy $\omega_{p}(v,v^\prime) >0 $  for some symplectic form $\omega$ on $\Sigma$ with $\int_\Sigma \omega > 0.$  Then applying Theorem \ref{edisepi} and equation (\ref{skeinsu2over}), and recalling $\beta = \frac{h}{2},$ we have  

\begin{equation}\labell{ex1e} W_C \star_h W_{C^\prime} =  \Big(\cosh(\frac{\sqrt{3}}{2} h) - \frac{1}{\sqrt{3}}\sinh(\frac{\sqrt{3}}{2} h )\Big)W_C W_{C^\prime} + \Big(\frac{2}{\sqrt{3}}\sinh(\frac{\sqrt{3}}{2} h)\Big) W_{C *_p C^\prime}. \end{equation}

Note that for $h$ small we have, comparing with (\ref{su2alt}), 

$$W_C \star_h W_{C^\prime} = W_C W_{C^\prime} + h( W_{C *_p C^\prime} -\frac12 W_C W_{C^\prime} )+ O(h^2)= W_C W_{C^\prime} + h \{W_C ,W_{C^\prime} \} + O(h^2)$$

\noindent as expected.

More generally, the algorithm for computing the star product of any two monomials $f= W_{C_1} \dots W_{C_n} $ and $g = W_{C_1^\prime} \dots W_{C_m^\prime}$ can be given as follows.

Step 1.  Translate $C_1\dots C_n$ by $\epsilon$ in the vertical direction to obtain a collection of curves in $\Sigma \times \R.$  Likewise translate $C_1^\prime\dots C_m^\prime$ by $-\epsilon$ to obtain another collection of curves in $\Sigma \times \R.$  Consider the product 

$$P = W_{T_\epsilon C_1} \dots W_{T_\epsilon C_n }W_{T_{-\epsilon} C_1^\prime} \dots W_{T_{-\epsilon} C_m^\prime} .$$

Step 2.  For any pair $C_i, C_j^\prime$, and any intersection point $p \in C_i \cap C_j^\prime,$ remove $W_{T_\epsilon C_i}$ and $W_{T_{-\epsilon} C_j^\prime}$ from the product $P$ and replace it with 

$$c W_{{T_\epsilon C_i}}\circ_p W_{T_{-\epsilon} C_j^\prime} + d W_{T_\epsilon C_i} W_{T_{-\epsilon} C_j^\prime}.$$

\noindent where \begin{equation}\labell{concat}W_{{T_\epsilon C_i}}\circ_p W_{T_{-\epsilon} C_j^\prime} (\pi^* A) = {\rm tr} ({\rm hol}_{T_\epsilon C_i,p}(\pi^*A) 
 {\rm hol}_{T_{-\epsilon}C_j^\prime,p}(\pi^*A))\end{equation}

\noindent and where $c,d$ are given as in (\ref{skeinsu2over2}) and (\ref{skeinsu2under2}) (in the case of $SU(2), SL(2,\R),$ and $SL(2,\C)$) or formulas (\ref{oskeingln}) and (\ref{uskeingln})  (in the case of $GL(n,\C)$ or $U(n)$).

In other words, replace $P$ by

$$P_p  =  W_{T_\epsilon C_1} \dots \hat{W}_{{T_\epsilon C_i}} \dots W_{T_\epsilon C_n } W_{T_{-\epsilon} C_1^\prime} \dots  \hat{W}_{{T_\epsilon C_j^\prime}} \dots W_{T_{-\epsilon} C_m^\prime}(c W_{{T_\epsilon C_i}}\circ_p W_{T_{-\epsilon} C_j^\prime} + d W_{T_\epsilon C_i} W_{T_{-\epsilon} C_j^\prime}).$$

Step 3.  Repeat  Step 2 for all other intersection points.\footnote{The formula for the kind of Wilson loop arising from concatenation of two curves is not quite the same as that in (\ref{concat}) when one of the curves is itself a concatenation; it is the trace of a product of holonomies along segments.  We omit the precise formulas since the geometric picture is so clear.}

\section{Some Comments and Speculations:  Relation to Geometric Quantization; Quantization of Moduli spaces}\labell{gq}

\subsection{Geometric Quantization, the star product on Kahler manifolds, and Topological Quantum Field Theory}
 
The formula for the star product of two functions $f$ and $g$ on the space of connections arising from a functional integral in Chern-Simons gauge theory where translates of $f$ and $g$ are "stacked on top of each other" is not unexpected from the point of view of geometric quantization and topological quantum field theory.  

To see this, we first recall the construction of the star product for Kahler manifolds  \cite{bg,bms,guill,s}.  Let $(M,\omega)$ be a compact Kahler manifold, and let $L$ be a Hermitian holomorphic line bundle of curvature $\omega.$  The geometric quantization of $(M,k\omega)$ (where $k \in \Z$) is given in this Kahler polarization by the Hilbert space

$$ {\bf H} = H^0(M, L^k)$$

\noindent of holomorphic sections of $L^k;$ this space ${\bf H}$ is a finite dimensional subspace of the infinite dimensional Hilbert space $\Gamma(M,L^k)$ of $L^2$ sections of $L^k.$\footnote{The $L^2$ metric arises from the Hermitian metric on $L$ and the symplectic volume form on $M.$}  Let $p : \Gamma(M, L^k) \to {\bf H}$ denote the projection.

Given $f \in C^\infty(M)$, we obtain a Toeplitz operator $T_f$ on ${\bf H}$ by defining

$$T_f s = p (fs)$$

\noindent for any $s \in H^0(M, L^k).$  These Toeplitz operators generate a non-commutative, associative algebra, and we have  \cite{bg,bms,guill, s} for $f,g \in C^\infty(M),$ asymptotically as $k \to \infty,$  

\begin{equation}\labell{toep} T_f T_g \sim T_{f\star_{\frac1k} g}\end{equation}

\noindent where $f\star_h g$ is a star product on $(M,\omega).$

Now let $\Sigma$ be a compact Riemann surface and let $G$ be a compact Lie group; take for simplicity $G=U(n).$  We may consider the space $S = {\rm Hom}(\pi_1(\Sigma), G)/G$ as a space of stable bundles on the Riemann surface $\Sigma.$  This is a Kahler variety in its own right, and possesses a Hermitian line bundle $L$ of curvature given by the Kahler form.\footnote{Depending on the precise details, the variety $S$ may be singular; we do not expect the singularities to affect any of these considerations.}  It may therefore be quantized as above, and we may apply (\ref{toep}) to obtain a star product on the algebra of smooth functions on $S.$  Among these functions are the Wilson loops.

\begin{figure}\label{figure2}
       \includegraphics[width=.65\textwidth]{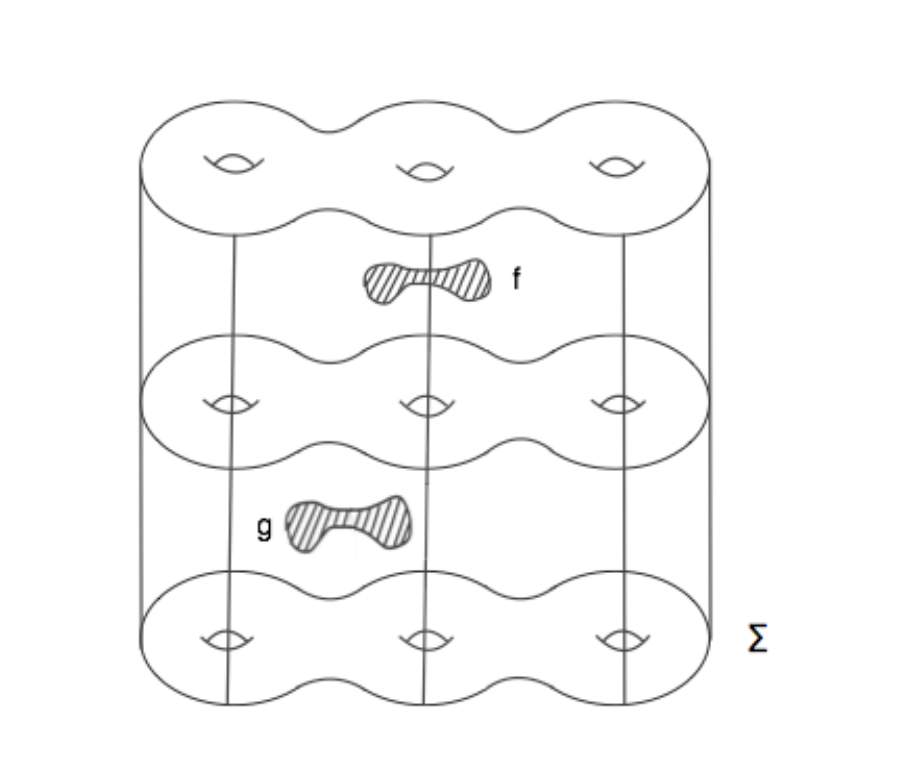}
\caption{The composition of $T_f$ and $T_g$ acting on the Hilbert space associated to $\Sigma.$}
   \end{figure}

On the other hand, the quantization ${\rm H}_S = H^0(S,L^k)$ is part of a three dimensional topological quantum field theory, giving the Hilbert space associated by the theory to the two manifold underlying the Riemann surface $\Sigma.$  One may reasonably hope that this topological quantum field theory would extend to three-manifolds with immersed Wilson loops.  Given a smooth function $f \in C^\infty(S),$ we would then expect to obtain an element of ${\rm H}_S \otimes {\rm H}_S$--that is, an operator $T_f : {\rm H}_S \to{\rm H}_S$--by placing the Wilson loops composing $f$ in a "block" or cobordism $\Sigma \times [0,1].$   One may also reasonably hope, from the axioms of Topological Quantum Field Theory,  that multiplication of two such operators $T_f, T_g$ would amount to composing the corresponding cobordisms; that is, to "stacking the blocks".\footnote{There is a vast literature on Skein Modules related to this type of construction in Topological Quantum Field Theory.}  (See Figure 1.)  We would expect these operators to coincide with the Toeplitz operators in the Kahler quantization ${\bf H}_S.$  The product obtained by "stacking the blocks" is then the operator product, and hence, by  (\ref{toep}),  gives the star product. Thus "stacking" the two functions $f$ and $g,$ to obtain the star product of $f$ and $g,$ as we did in Definition \ref{stp}, is precisely what would be predicted by Topological Quantum Field Theory, Geometric Quantization, and the relation between them in the case of moduli spaces.

In our Chern Simons functional integral in axial gauge, something like this rather subtle picture\footnote{Subject to issues of gauge invariance and the Second Reidemester move; see Section \ref{R2sec}.} is obtained from the simple relation between $\Gamma$ and $\Pi.$  We hope this serves as an illustration of the power of functional integrals when they can be treated mathematically.

\subsection{Gauge invariance, quantization of moduli spaces, and the Second Reidemeister move.}\labell{R2sec}

We saw in Section \ref{goldmansec} that our Poisson bracket on the Wilson loops on the space $\cA(\Sigma)$ of connections gave rise to Goldman's Poisson bracket on these same Wilson loops, considered as functions on the moduli spaces $\cA(\Sigma)//\cG$ of flat connections on $\Sigma;$ these moduli spaces are the character varieties $S = {\rm Hom}(\pi_1(\Sigma), G)/G$ considered above.  This follows from the general fact that on a Hamiltonian $G-$space $M,$ the Poisson bracket of invariant functions gives a Poisson bracket on the algebra of functions on the symplectic quotient $M//G,$ coinciding with the Poisson bracket coming from the symplectic form on the quotient.

Fedosov \cite{f2} shows that the deformation quantization of a Hamiltonian $G$-space also descends to the quotient in a similar way.   We may consider what this would mean for the case of the space of connections and its quotient by the gauge group.  Given two homotopic loops on $\Sigma,$ there is no necessary relation between the corresponding Wilson loops, considered as functions on $\cA(\Sigma).$  But as functions on the quotient $S,$ two homotopic loops give rise to identical functions.  So if our star product of such functions on $\cA(\Sigma)$ were to descend to a star product on the quotient, the star product of loops would have to be a homotopy invariant.  In particular, it would have to be invariant under the slide move in Figure 2---the two dimensional analog of the Second Reidemeister move for knot diagrams.
\begin{figure} 
       \includegraphics[width=.65\textwidth]{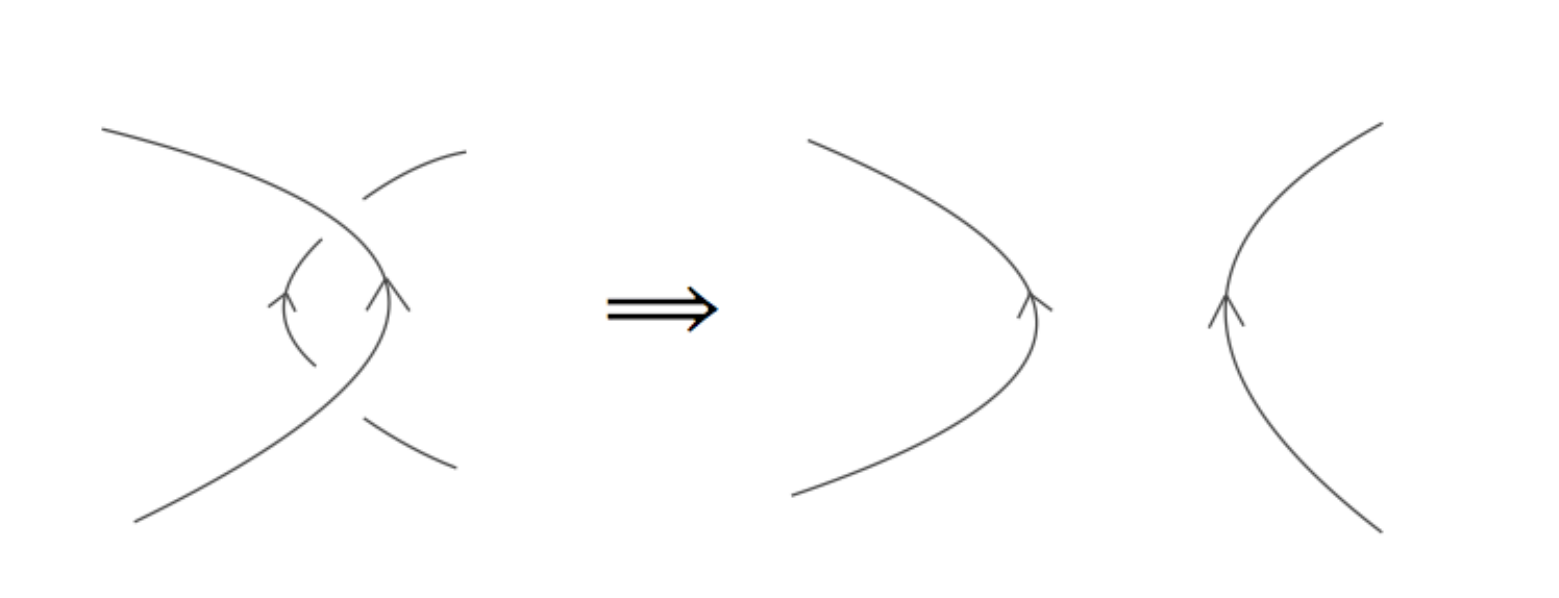}
\caption{The second Reidemeister move}
   \end{figure}
Now in \cite{kqm} we showed that in the case of $G=SU(2),$ the expectation $\langle\cdot\rangle$ was not invariant under the Second Reidemeister move; extracting a link invariant involved a shift in the value of the expectation of a single unknotted loop $\langle \bigcirc \rangle$ from the value expected in the Chern-Simons path integral, which is $\langle \bigcirc \rangle = 2.$\footnote{In \cite{kqm} we used a slightly different normalization for the expectation of Wilson loops associated with unoriented curves when $G = SU(2),$ which would change this expectation by a sign.} Since this value is also the value of the Wilson loop corresponding to a nullhomotopic simple closed curve, considered as a function on the space of flat connections, the same issue will affect the star product as well.  So our star product on $\cA(\Sigma)$ will not give a star product on $S;$
but the difficulty is due to a shift in something like a ground state energy.

From the point of view of \cite{f2}, there is a geometric explanation of why we do not obtain a star product on the quotient.  Fedosov's construction of the deformation quantization of a symplectic manifold $M,$ thought of as a star product on functions, involves a choice of a symplectic connection on $M.$  While the resulting algebra, thought of in terms of a sheaf of algebras on $M,$ is in an appropriate sense independent of this choice, this invariance is not so straightforward to describe in terms of the star product on functions.  

Now in our construction of the star product on $\cA(\Sigma),$ we made a particular choice of symplectic connection, arising from the affine structure of $\cA(\Sigma).$  This connection is not gauge invariant, and hence cannot be expected to yield a star product on $S.$  

Turning our attention back to Chern-Simons gauge theory, and motivated by the observation that gauge (non-)invariance may be behind the failure of the star product to be invariant under homotopy, it may be interesting to consider whether a more sophisticated treatment of the time independent gauge transformations in Chern Simons gauge theory in axial gauge may modify expectations in the theory
sufficiently so as to obtain invariance under the Second Reidemeister move, and give a geometric explanation for the shifted expectation $\langle \bigcirc \rangle = -(q^2 + q^{-2}) .$

\subsection{Skein modules and quantized Teichmuller space; possible relation to the volume conjecture}\labell{skein}

A relation similar to the one we have found between quantization of the space of connections and the Kauffman bracket is the relation between skein modules and the deformation quantization of Teichmuller spaces of \cite{cf,kashaev}.  A few references on this relation are \cite{tur1,bw,bfkb1,bfkb2,le,ps}.  Since Teichmuller space occurs as a component of the reduction of the space of connections by the gauge group, where $G=SL(2,\R),$ and since as remarked in \cite{kqm} and above, the expectations in Chern Simons gauge theory are the same for $SU(2), SL(2,\R),$ and $SL(2,\C),$ our methods should give a geometric proof of this relation using mathematical functional integrals, if we are able to find a gauge invariant symplectic connection, or at least a modification of the Chern Simons functional integral which is invariant under the second Reidemeister move, as explained in Section \ref{R2sec}.

We should note that results on the uniqueness of star products give reason to expect that the desired star product, as a star product on the space of connections, is equivalent to the star product in this paper.  Recall that given a star product $\star_h$ on a symplectic manifold $M$ and a differential operator $T$ on $C^\infty(M)[[h]]$ of the form

$$T = 1 + h \tau$$

\noindent where $\tau$ is another differential operator on $C^\infty(M)[[h]],$ a new star product $\star^\prime_h$ can be defined by

$$ f \star^\prime_h g = T^{-1} (Tf \star_h Tg)$$

\noindent for $f, g \in C^\infty(M)[[h]];$ two such star products are called {\em equivalent}.  Then \cite{bcg,deligne, gr} if $H^2(M) = 0,$ any two star products are related by a combination of equivalence and a change of the parameter $h.$\footnote{See \cite{gr}, Corollary 9.5.} Since $\cA(\Sigma)$ is contractible (though infinite dimensional), we may hope for a similar result here.

In a yet more speculative vein, the relation between the Kauffman bracket and quantized Teichmuller space, and more generally the coincidence of the expectations of loops in Chern Simons gauge theory for the group $SU(2)$ with those for Chern-Simons gauge theory for the groups $SL(2,\R)$ and $SL(2,\C),$ may be related to the volume conjecture \cite{kash,mura,muramura}

 \appendix
 \section{Some comments and speculations on a coproduct structure}
 
 In \cite{tu} Turaev showed that the Goldman bracket on the algebra of oriented loops on a two manifold could be supplemented with a cobracket, forming a bialgebra structure.
 
Something like this may be visible in our gauge theoretic construction.  In particular, if we consider the case $G = U(n)$ and take $n\to \infty$, the monomials in the Wilson loops might be expected to be linearly independent, and form a basis for a vector space $W$ of functions on $\cA(\Sigma).$  If we take the duals of those functions to be a basis for the dual vector space $W^\star,$ then we obtain a formula similar to (\ref{pbgln}), giving a bracket $W^\star\otimes W^\star \to W^\star.$  This bracket is equivalent to a cobracket on $W,$ which is the Turaev cobracket.

The bracket on $W^\star$ can be exponentiated to give an associative product on $W^\star,$ in line with the formulas in Definition \ref{stp} and given more explicitly in Section \ref{ex1}.  This in turn gives a coproduct on $W.$

\begin{Question}  Do the product and coproduct combine to make the loops on a compact oriented two manifold into a Hopf Algebra??

\end{Question}

If we could overcome the difficulties of gauge invariance, we might expect to obtain also a product and coproduct on the homotopy classes of loops on a compact oriented Riemann surface; equivalently, on the Wilson loops, considered as functions on $\cA(\Sigma)//\cG.$  In this case, there is a unit (consisting of the trivial loop) and counit (consisting of its dual), and likewise an inverse (by reversing the orientation of the loop) and antipode.  We may ask whether these fit into any kind of Hopf algebra structure.

\begin{Question} Is there an analogous construction for unoriented loops?\end{Question}

Of course all of these speculations ignore transversality issues and other troubles, which mean that none of our products or coproducts are really well-defined.  They are very far from established facts.

\subsection{A geometric coproduct}  There is also another coproduct construction on the algebra of loops, arising from Bott periodicity.  Again, let us take $G = U(n).$

The space $\cA(\Sigma)$ is a contractible space on which the group $\cG_\bullet$ of based maps from $\Sigma$ to $G$ acts freely.  The functions $W_C$ are $\cG$-invariant functions on this space.  We can therefore consider them as functions on the classifying space $B \cG =  B({\rm Map}(\Sigma, U(n)))= {\rm Map}(\Sigma, BU(n) ).$  Note that if we are given a curve $C$ in $\Sigma$ and a point of ${\rm Map}(\Sigma, BU(n) ), $ we obtain by composition an element of $\Omega BU(n) = U(n);$ taking the trace we get a complex number, and the resulting function on ${\rm Map}(\Sigma, BU(n) )$ (or, equivalently,  $\cG$-invariant function on $\cA(\Sigma),$ is the Wilson loop $W_C.$  If we take $n \to \infty,$ these are functions on $ B({\rm Map}(\Sigma, U )) = {\rm Map}(\Sigma, BU ) = {\rm Map}(\Sigma, \Omega_0 U ),$ where 
$ \Omega_0 U $ is the space of based loops on $U$ which are homotopically trivial, by Bott periodicity.  The space $ \Omega_0 U $ is a topological group (from the group structure on $U$) as well as an $H$-space (from concatenation of loops).  This means that the space $ B({\rm Map}(\Sigma, U ))={\rm Map}(\Sigma, \Omega_0 U )$ also has two $H-$space structures.  So the space of functions on ${\rm Map}(\Sigma, \Omega_0 U )$ acquires two coproduct structures, one of which is coassociative and one of which is coassociative "up to homotopy".
A third construction of a coproduct arises from  a map  $BU \times BU \to BU$  we can write directly:  Identifying $BU(n) = Gr(n,\infty)$ with the Grassmannian, consider the map $BU(n) \times BU(n) \to BU(2n)$ arising from the map taking two $n$-dimensional subspaces of Hilbert space $x, y \in Gr(n,\infty)$ to $ x \oplus y \in Gr(2n,\infty).$  Taking the limit $n \to \infty,$ we obtain a map $BU \times BU \to BU.$  And this again gives a type of product structure on ${\rm Map}(\Sigma, BU),$ again giving a coproduct on the functions on ${\rm Map}(\Sigma, BU).$\footnote{This construction on $BU$ is well known (see e.g. \cite{jfa}), but its application to ${\rm Map}(\Sigma,BU)$ is not, to my knowledge.}  Note that all these coproducts are coassociative, but not co-commutative; and they do not come in a parametrized family in any obvious way.\footnote{It is possible to introduce an integer parameter into the third construction by modifying the map to $(x,y) \to x^{\oplus k} \oplus y.$}

 \begin{Question} Do any of these geometric coproducts have anything to do with the Turaev cobracket?
\end{Question}


\begin{thebibliography}{asdf}
\bibitem{ab} M. Atiyah, R. Bott.  The Yang-Mills functional over a Riemann surface.  {\em Phil. Trans. Roy. Soc.} {\bf A308}, 523-615 (1982)
\bibitem{bffls} Bayen, F.; Flato, Moshe; Fronsdal, Christian; Lichnerowicz, A.; Sternheimer, D.
Deformation theory and quantization. I: Deformations of symplectic structures. Ann. Phys. 111, 61-110 (1978). 
\bibitem{bcg} Bertelson, Melanie; Cahen, Michel; Gutt, Simone.
Equivalence of star products.  
Classical Quantum Gravity 14, No. 1A, A93-A107 (1997). 
\bibitem{bw} Bonahon, Francis; Wong, Helen.
Representations of the Kauffman bracket skein algebra. I: Invariants and miraculous cancellations.  
Invent. Math. 204, No. 1, 195-243 (2016)
\bibitem{bms}Bordemann, Martin; Meinrenken, Eckhard; Schlichenmaier, Martin.
Toeplitz quantization of Kahler manifolds and ${\mathfrak gl}(N), N \to \infty$
limits.  
Commun. Math. Phys. 165, No. 2, 281-296 (1994). 
\bibitem{bg} Boutet de Monvel, L.; Guillemin, V.
The spectral theory of Toeplitz operators. 
Annals of Mathematics Studies, No. 99. Princeton, New Jersey: Princeton University Press (1981). 
\bibitem{bfkb1} Doug Bullock, Charles Frohman, Joanna Kania-Bartoszynska,
Understanding the Kauffman bracket skein module , J. Knot Theory Ramifications 8 (1999), 265-277.
\bibitem{bfkb2} Doug Bullock, Charles Frohman, Joanna Kania-Bartoszynska,
The Kauffman bracket skein as an algebra of observables, Proc. Amer. Math. Soc. 130 (2002), 2479-2485
 \bibitem{cs} M. Chas, D. Sullivan.  String Topology.  arXiv:math/9911159
\bibitem{dl} De Wilde, Marc; Lecomte, Pierre B. A. Existence of star-products and of formal deformations of the Poisson Lie algebra of arbitrary symplectic manifolds. Lett. Math. Phys. 7, 487-496 (1983). 
\bibitem{deligne} Deligne, P.
 Deformations de l'algebre des fonctions d'une variete symplectique: Comparaison entre Fedosov et De Wilde, Lecomte.    Sel. Math., New Ser. 1, No. 4, 667-697 (1995). 
  \bibitem{f}  Fedosov, Boris V.  A simple geometrical construction of deformation quantization.   J. Differ. Geom. 40, No. 2, 213-238 (1994). 
\bibitem{f2}  Fedosov, Boris V. Non-abelian reduction in deformation quantization. Lett. Math. Phys. 43, No. 2, 137-154 (1998). 
\bibitem{cf} Fock, V. V.; Chekhov, L. O.
A quantum Teichmuller space.  
Theor. Math. Phys. 120, No. 3, 1245-1259 (1999) 
\bibitem{homfly} Freyd, Peter; Yetter, David; Hoste, Jim; Lickorish, W.B.R.; Millett, Kenneth; Ocneanu, Adrian.  A New Polynomial Invariant of Knots and Links. Bulletin of the American Mathematical Society. 12 (2): 239-246 (1985)
 \bibitem{goldman} Goldman, William M. Invariant functions on Lie groups and Hamiltonian flows of surface group representations. 
Invent. Math. 85, 263-302 (1986). 
\bibitem{guill} Guillemin, Victor.  Star products on compact pre-quantizable symplectic manifolds.  Lett. Math. Phys. 35, No. 1, 85-89 (1995). 
\bibitem{gr} Gutt, Simone; Rawnsley, John.
Equivalence of star products on a symplectic manifold; an introduction to Deligne's Cech cohomology classes.  
J. Geom. Phys. 29, No. 4, 347-392 (1999). 
 \bibitem{Jones}  Jones, Vaughan F. R. A polynomial invariant for knots via von Neumann algebras. Bull. Amer. Math. Soc. (N.S.) 12 (1985), no. 1, 103-111. 
 \bibitem{kash} Kashaev, Rinat M.  The hyperbolic volume of knots from the quantum dilogarithm, Letters in Mathematical Physics, 39 (3) 269-275 (1997)
 \bibitem{kashaev} Kashaev, R. M.
Quantization of Teichmuller spaces and the quantum dilogarithm.  
Lett. Math. Phys. 43, No. 2, 105-115 (1998). 
 \bibitem{K2}  Kauffman, Louis H. An invariant of regular isotopy. Trans. Am. Math. Soc. 318, No. 2, 417-471 (1990)

 \bibitem{k}  Kontsevich, Maxim.  Deformation quantization of Poisson manifolds.   Lett. Math. Phys. 66, No. 3, 157-216 (2003). 
 \bibitem{le} Le, Thang T. Q.  Quantum Teichmuller spaces and quantum trace map.   J. Inst. Math. Jussieu 18, No. 2, 249-291 (2019). 
 \bibitem{mura}Murakami, Hitoshi. An Introduction to the Volume Conjecture. arXiv:1002.0126 
 \bibitem{muramura} Murakami, Hitoshi; Murakami, Jun.  The colored Jones polynomials and the simplicial volume of a knot, Acta Mathematica, 186 (1): 85-104 (2001)
  \bibitem{nachbin} Nachbin, Leopoldo.  Sur les algebres denses de fonctions differentiables sur une variete. 
C. R. Acad. Sci., Paris 228, 1549-1551 (1949). 
 \bibitem{omy} Omori, H., Maeda, Y., Yoshioka, A.: Weyl manifolds and deformation quantization. Adv. Math. 85 (1991) 224-255.
 \bibitem{ps} Jozef H. Przytycki, Adam S. Sikora,
On skein algebras and $SL(2,\C)$-character varieties, Topology
39 (2000), 115-148
 \bibitem{rt} Reshetikhin, N.; Turaev, V. G. Invariants of 3-manifolds via link polynomials and quantum groups. Invent. Math. 103 (1991), no. 3, 547-597
 \bibitem{s}Schlichenmaier, Martin.
Berezin-Toeplitz quantization for compact Kahler manifolds. A review of results. 
Adv. Math. Phys. 2010, Article ID 927280, 38 p. (2010). 
\bibitem{tur1} Vladimir G. Turaev,
Skein quantization of Poisson algebras of loops on surfaces, Ann. Sci. Ecole
Norm. Sup. 24 (1991), 635-704.
 \bibitem{kqm} Weitsman, Jonathan.  The Chern-Simons Functional Integral, Kauffman's Bracket Polynomial, and other link invariants.   arXiv:2405.16775  
 \bibitem{weitsman} Weitsman, Jonathan.   Real Polarization of the Moduli Space of Flat Connections
on a Riemann Surface. {\em Commun. Math. Phys.} {\bf 145}, 425-433 (1992)
\bibitem{Witten}  Witten, Edward. Quantum field theory and the Jones polynomial. Comm. Math. Phys. 121 (1989), no. 3, 351-399


\end{thebibliography}

\begin{thebibliography}{asdf}
\bibitem{jfa} J. F. Adams,Stable homotopy and generalized homology.  University of Chicago Press, 1974.
\bibitem{tu} V. Turaev, Skein  quantization  of  Poisson  algebras  of  loops  on  surfaces.
Ann. Sci. Ecole Norm. Sup. (4) 24 (1991), no. 6, 635-704
 \end{thebibliography}
 \end{document}